\tikzset{
  symbol/.style={
    draw=none,
    every to/.append style={
      edge node={node [sloped, allow upside down, auto=false]{$#1$}}}
  }
}
\newif\if@check@engine  \@check@enginetrue 
\newcommand{\nocontentsline}[3]{}
\newcommand{\tocless}[2]{\bgroup\let\addcontentsline=\nocontentsline#1{#2}\egroup}
\newtheorem{theor}{\hspace{1cm}{\sc Theorem}}[section]
\newtheorem{prop}[theor]{\hspace{1cm}{\sc Proposition}}
\newtheorem{coro}[theor]{\hspace{1cm}{\sc Corollary}}
\newtheorem{lemma}[theor]{\hspace{1cm}{\sc Lemma}}
\newtheorem*{prop*}{\hspace{1cm}{\sc Proposition}}
\theoremstyle{definition}
\newtheorem{defin}[theor]{\hspace{1cm}{\sc Definition}}
\newtheorem*{defin*}{\hspace{1cm}{\sc Definition}}
\newtheorem{exa}[theor]{\hspace{1cm}{\sc Example}}
\newtheorem{rem}[theor]{\hspace{1cm}{\sc Remark}}
\newtheorem{quest}[theor]{\hspace{1cm}{\sc Question}}
\newtheorem{convention}[theor]{\hspace{1cm}{\sc Conventions}}
\newcounter{idx}
\newcommand{\rotraise}[1]{
  \StrLen{#1}[\slen]
  \forloop[-1]{idx}{\slen}{\value{idx}>0}{
    \StrChar{#1}{\value{idx}}[\crtLetter]
    \IfSubStr{tlQWERTZUIOPLKJHGFDSAYXCVBNM}{\crtLetter}
      {\raisebox{\depth}{\rotatebox{180}{\crtLetter}}}
      {\raisebox{1ex}{\rotatebox{180}{\crtLetter}}}}
}
\renewcommand{\emph}[1]{{\it {\color{NavyBlue} #1}}}
\def\R{\mathbb R}
\newcommand*{\addFileDependency}[1]{%
\typeout{(#1)}%

\@addtofilelist{#1}
\IfFileExists{#1}{}{\typeout{No file #1.}}
}\makeatother
\begin{document}

\title[Regular tropical fans]{Tropical fans supporting a reduced 0-dimensional complete intersection}
\subjclass[2020]{14T15, 14T20}

\author{Linxuan Li}

\begin{abstract}
An affine tropical fan is called regular if it supports a reduced 0-dimensional complete intersection. For some cases the classification of regular fans is already complete. It was proved by Fink that tropical varieties of degree 1 are exactly Bergman fans, and later Esterov and Gusev classified all lattice polytopes whose mixed
volume equals $1$. We introduce the notion of a \textit{gallery} for tropical fans and use it
to classify all one-dimensional regular fans, thereby obtaining a ‘minimal model programme' for such fans.
In dimension~$2$ we prove a finiteness theorem: every regular fan that satisfies the given upper bound condition is precisely the support of a finite covering by two-dimensional galleries, and only finitely many such fans exist.

\end{abstract}

\maketitle
\tableofcontents

\section{Introduction}

Tropical fans are important devices in combinatorial geometry: for example, every matroid $M$ can be encoded by its Bergman fan $[M]$, and every convex lattice polytope $P$ by its dual fan $[P]$. The set $\mathcal{T}$ of all tropical fans in $\R^n$ has a natural ring structure (see e.g.\cite{MikhalkinRau}), modeled after the cohomology ring of a toric variety \cite{FultonSturmfels97}, and plays an important role in combinatorial geometry as well. For example, the product of dual fans of polytopes in this ring is a 0-dimensional fan $[P_1]\cdot\cdots\cdot[P_n]$, whose weight records the mixed volume of the polytopes $P_1,\ldots,P_n$.

Algebraic geometry for tropical fans is an important common abstraction in geometry of algebraic varieties and of matroids, yielding important results in both fields.

We say that a $k$-dimensional tropical fan $F$ is \textit{regular} if there is a \textit{regular sequence} of polytopes $P_1,\dots,P_k$ such that $[P_1]\cdot\cdots\cdot[P_k]\cdot F=1$. This definition is modelled after the definition for a point $x$ being regular in an algebraic set $V\subset\mathbb{C}^n$, namely that there exists a local regular sequence defining a 0-dimensional complete intersection of multiplicity $1$ on $(V,x)$. Regular fans generalise the established class of tropical fans that are considered smooth, namely Bergman fans of matroids up to $\mathrm{SL}_n(\mathbb{Z})$-linear transformations \cite{Fink}. Smooth fans appear frequently in tropical geometry; which of their appearances can be generalised to regular fans?

Following a question in \cite{E19}, we aim to classify regular fans and regular sequences on them. 

There already exist some results that implicitly answer special cases of this question: if we require all polytopes in the regular sequence to equal the simplex, then all fans with this particular regular sequence are exactly matroid fans \cite{Fink}. If the codimension of $F$ is 1, then this is the known classification of polytopes of mixed volume 1 \cite{Esterov-Gusev}. Thus our goal can be seen as a search for a common generalization of these results.

A first idea of how the general answer might look like is given by the full answer for the 1-dimensional case.
\begin{theor}[Theorem 3.16]
1. Every one-dimensional fan $F$ is regular if and only if it admits a projection $\pi_F$ to a direct sum of 1-dimensional Bergman fans so that any other projection holding with this property factors through it.

2. Every regular sequence on $F$ lifts from a regular sequence on $\pi_F(F)$.
\end{theor}

The version for fans of arbitrary dimension has to be more complicated and resembles the minimal model programme.

\vspace{1ex}

{\sc ‘Minimal model programme' for regular cycles.} A generalized question of ‘minimal model programme' for tropical cycles from Theorem 1.1 is as follows:
\begin{quest}[A.Esterov]
     Does there exist a finite class of {\it minimal model} $k$-cycles, such that every regular $k$-cycle, satisfying the Hodge--Lefschetz package of \cite{AdiprasitoHuhKatz20},  is one of the following?

$\bullet$ Either it can be represented as a tropical fibration (Definition 2.15), from whose base and fibers all its regular sequences are induced,

$\bullet$ or it projects to a minimal model cycle, from which each of its regular sequence lifts. 
\end{quest}

In the second item of Question 1.2 the regular sequences should be interpreted as consisting of tropical rational functions defined only on the given regular $k$-cycle.

\begin{rem}
    Note that, instead of tropical fans, we work with affine tropical cycles, i.e., the equivalence classes of tropical fans modulo refinements (Definition 2.4). This is not only because working with tropical k-cycles is more general, but also because the finiteness theorem (Theorem 4.13) fails once we restrict attention to tropical fans of dimension two or higher instead of cycles.
\end{rem}

\vspace{1ex}

The Hodge--Lefschetz assumption in this question is satisfied by all of the most interesting classes of cycles like the aforementioned cycles of polytopes and matroids. Moreover, it serves as a useful tool which provides us strong inequalities on the tropical intersection number. For example, the Hodge–Lefschetz assumption reduces to the Hodge index theorem for $2$-cycles as in \cite{BabaeeHuh19}:
$$([P_1]\cdot[P_2]\cdot F)^2\ge([P_1]\cdot[P_1]\cdot F)([P_2]\cdot[P_2]\cdot F)$$
For a regular sequence $(M_1,M_2)$ on $F$, i.e., $M_1\cdot M_2\cdot [F]=1$ (Definition 2.6), this implies either

$$\ \ a).\ M_1\cdot M_1\cdot [F]=1;\ \ M_2\cdot M_2\cdot [F]=1\ \mathrm{or},$$
$$b).\ M_1\cdot M_1\cdot[F]=0\ \mathrm{or}\ M_2\cdot M_2\cdot[F]=0.$$

In future work, we expect to show that $b)$ imposes the structure of a regular fibration (Definition 2.15) on $F$, and now we completely resolve case $a)$ with the following theorem. 

\begin{theor}[Theorem 4.13]
     Assume that $M_1,M_2$ are two tropical regular functions over $\mathbb{R}^n$ such that dim$(\mathrm{Newt}(M_1)+\mathrm{Newt}(M_2))=n$. Then there are finitely many 2-cycles $[F]$ such that $M_1\cdot M_1\cdot [F]\le 1$, $M_1\cdot M_2\cdot [F]\le 1$, and $M_2\cdot M_2\cdot [F]\le 1$.
\end{theor}

When the Hodge index theorem hypothesis is removed, finiteness of regular $2$-cycles is not to be expected. However, one may hope to replace the Hodge–Lefschetz package in Question~1.2 by other, more general assumptions.

\begin{rem}
In future work, we expect to show that every cycle satisfying Theorem 1.4 has a matroid minimal model, thus confirming the MMP expectations.
\end{rem}
\vspace{1ex}
\begin{rem} 
The Hodge index theorem is satisfied by all of the most important classes of tropical fans, such as matroid fans (see \cite{AdiprasitoHuhKatz20}) and tropicalizations of algebraic varieties (see \cite{Esterov25}), so Theorem 1.4 applies to all regular sequences on all such fans $F$.

Fans failing to satisfy the Hodge--Lefschetz assumption do not look ubiquitous in algebraic or combinatorial geometry, or even easy to construct: for instance, the key achievement of the fundamental paper \cite{BabaeeHuh19} was to construct at least one such ``irreducible'' fan in $\R^4$, as it gave a counterexample to a certain strong version of the Hodge conjecture.

Still, it would be interesting to know whether one could replace the Hodge-theoretic assumptions with some milder alternatives in Question 1.2, so that Theorem 1.4 would hold true with these alternative assumptions instead of $a)$.
\end{rem}
\vspace{1ex}

\begin{rem} 
Besides supporting the conjecture, our classification is relevant to the study of so-called engineered complete intersections \cite{E24,Zhizhin,KAH25} -- a class of toric complete intersections which includes nondegenerate ones, hyperplane arrangement complements, generalized Calabi-Yau complete intersections, and several other important classes of algebraic varieties. For instance, in \cite{Esterov25}, the classification of reducible engineered complete intersections is related to a combinatorial question similar to the classification of regular cycles with the condition $[F]\cdot[P_1]\cdot\cdots\cdot [P_k]=0$ instead of $1$.

The classification of regular cycles will give more precise certificates for irreducibility of engineered complete intersections, in the same way as the classification of polytopes of unit mixed volume \cite{Esterov-Gusev} gave a precise classification of reducible nondegenerate complete intersections in \cite{KavehManon19}. Such irreducibility criteria are important e.g. when constructing special varieties with prescribed topology in the form of toric complete intersections which is the key source of e.g. Calabi--Yau varieties -- see e.g. \cite{Batyrev} for non-degenerate complete intersections. \\
\end{rem}

Although Section 2 introduces a few new notions—regular cycles (Definition 2.9) and
galleries (Definition 2.19)—most of its material is standard.
Readers already familiar with tropical geometry may treat this section as
preliminaries.

Section 3 gives a complete classification of regular $1$–cycles.
The first two subsections analyse their structure in detail (Theorem 3.1,3.2); Section 3.2
constructs the relevant regular sequences (Theorem 3.11), and Section 3.3 proves the
‘minimal model programme' for 1-dimensional tropical fans (Theorem 3.16).

Section 4 turns to dimension $2$. It presents a series of illustrative examples that highlight both key implications and their failures under our hypotheses.
After establishing the structures for $2$–dimensional galleries, we
prove a finiteness theorem for the strongly regular $2$–cycles (Theorem 4.13).\\

\paragraph*{\textbf{Acknowledgements.}}
The author is deeply grateful to \textsc{Alexander Esterov} and \textsc{Alex Fink} for their patient supervision, fruitful discussions, and valuable suggestions.  
This work was supported by the Chinese Scholarship Council.\\

\section{Regular cycles}

We introduce our main object in this paper—the regular cycles—after recalling the basic language of tropical fans, their intersection product \cite{AllermannRau10} and the stable intersection \cite{MaclaganSturmfels15}.  We also review Bergman fans \cite{ArdilaKlivans06} and introduce a new construct—the gallery—that will frame our later discussion of the cycles.\\ 

\subsection{Affine tropical cycles}
We will briefly review some fundamental objects for readers new to tropical geometry. Everything in this subsection is taken from \cite{AllermannRau10,Cox}.  
For additional examples, see Sections 1–3 of those references.
\begin{defin}[{\cite[Cones]{Cox}}]
A \textit{convex polyhedral rational cone} in $\mathbb{R}^n$ is a set of the form
$$\sigma=\mathrm{Cone}(S):=\{\sum_{u\in S}\lambda_uu\mid\lambda_u\ge 0\}\subseteq\mathbb{R}^n$$
where $S\subseteq\mathbb{Z}^n$ is finite. 

We denote the subspace spanned by $\sigma$ by $\langle\sigma\rangle$ and by $\mathbb{Z}_\sigma$ the lattice $\mathbb{Z}^n\cap\langle\sigma\rangle$. We call $\sigma$ strongly convex if and only if $\sigma\cap(-\sigma)=\{0\}$.
\end{defin}

\begin{defin}[{\cite[Weighted fans]{AllermannRau10,Cox}}]
    A fan $X$ in $\mathbb{R}^n$ is a finite set of cones in $\mathbb{R}^n$ satisfying the following conditions:

    (a) Every $\sigma\in X$ is a strongly convex rational cone in $\mathbb{R}^n$.

    (b) For all $\sigma\in X$, each face of $\sigma$ is also in $X$.

    (c) For all $\sigma_1,\sigma_2\in X$, the intersection $\sigma_1\cap\sigma_2$ is a  face of each.

    If $X$ is a fan, then

  1. the support of $X$ is $|X|=\bigcup_{\sigma\in X}\sigma\subseteq\mathbb{R}^n$.

  2. The dimension of $X$ is defined to be the maximum of the dimensions of the cones in $X$.

    3. The fan $X$ is called \textit{pure-dimensional} if each inclusion-maximal cone in $X$ has this dimension.

    We denote the set of all $k$-dimensional cones of $X$ by $X^{(k)}$.  

    A weighted fan $(X,\omega_X)$ of dimension $k$ is a fan $X$ of pure dimension $k$ equipped with a map $\omega_X:X^{(k)}\to\mathbb{Z}_{\ge0}$. The number $\omega_X(\sigma)$ is called the weight of the face $\sigma\in X^{(k)}$.
\end{defin}

\begin{defin}[Tropical fans]
    A tropical fan of dimension $k$ in $\mathbb{R}^n$ is a weighted fan $(F,\omega_F)$ of dimension $k$ satisfying the balancing condition for every $\tau\in F^{(k-1)}$:
    $$\sum_{\tau<\sigma}\omega(\sigma)v_{\sigma/\tau}=0\in \mathbb{R}^n/\langle\tau\rangle$$
    where $v_{\sigma/\tau}$ is the primitive generator from $\tau$ to $\sigma$ (\cite{MikhalkinRau} Section 2). We simply write $F$ instead of $(F,\omega_F)$ in this paper.
\end{defin}

Let $(X,\omega_X)$ and $(Y,\omega_Y)$ be two tropical fans. We will use the word `refinement' in the sense of \cite{AllermannRau10}; note that this differs from the way it is often used in polyhedral geometry. We call $(X,\omega_X)$ a refinement of $(Y,\omega_Y)$ if every cone of $X$ is a subset of a cone of $Y$ and the weight function $\omega_X$ is induced from $\omega_Y$. Two tropical fans are equivalent if they have a common refinement. Readers may consult \cite{AllermannRau10} for more details. 

\begin{defin}[Affine tropical cycles]
Let $(X,\omega_X)$ be a tropical fan of dimension $k$. We denote the equivalence class under the equivalence relation above by $[(X,\omega_X)]$. We refer to $Z^{\mathrm{aff}}_{k}(\mathbb{R}^n)$ as the set of equivalence classes of tropical fans over $\mathbb{R}^n$. The elements of $Z^{\mathrm{aff}}_k(\mathbb{R}^n)$ are called (affine tropical) k-cycles. Again, we write $[X]$ instead of $[(X,\omega_X)]$ if the weight function is clear.
\end{defin}

\subsection{Tropical intersection product and regular cycles}

\begin{defin}[Tropical regular functions] A tropical regular function $T(x):\mathbb{R}^n\to\mathbb{R}$ (TR function) is a global continuous piecewise linear convex function defined by
$$x\mapsto \max(l_1,\ldots,l_k)(x)$$
where $l_i\in(\mathbb{Z}^n)^\vee$. We will express $T$ as $\max(l_1,\dots,l_k)$ for convenience. In particular, any linear form $l_{i}$ that cannot be omitted is called a \textit{linear functional} of $T$. Moreover, we refer to $V(T)$ as the hypersurface of the TR function $T$.
\end{defin}

\begin{defin}[Tropical Intersection Product]
    Let $T$ be a TR function and let $[(F,\omega_F)]$ be a $k$-cycle with a representative $(F,\omega_F)$ such that $T$ is linear on each cone $\sigma\in F$.  The tropical intersection product is a $(k-1)$-cycle $T\cdot [F]:=[(\bigcup_{i=0}^{k-1}F^{(i)},\omega_{T\cdot[F]})]\in Z^{\mathrm{aff}}_{k-1}(\mathbb{R}^{n})$:
    \begin{align*}
        \omega_{T\cdot [F]}: F^{(k-1)} &\longrightarrow \mathbb{Z},\\
    \tau &\mapsto \sum_{\sigma\in F^{(k)},\tau<\sigma}T_\sigma(\omega_F(\sigma)v_{\sigma/\tau})-T_\tau(\sum_{\sigma\in F^{(k)},\tau<\sigma}\omega_F(\sigma)v_{\sigma/\tau})
    \end{align*}
    where $T_\sigma:\langle\sigma\rangle\to\mathbb{R}$ is the $\mathbb{Z}$-linear functional defined by $T_\sigma(x)=T(x)$ for all $x\in \langle\sigma\rangle$ (similar for $T_\tau$).
\end{defin}

Section 3 of \cite{AllermannRau10} shows that the definition is well-defined: it is independent of the choice of the normal vector $v_{\sigma/\tau}$ and of the representative $F$ of the cycle $[F]$.

\begin{lemma}[{\cite[Remark 3.6]{AllermannRau10}}]
    Let $T$ be a TR function over $\mathbb{R}^n$ and let $[F]$ be a $k$-cycle. Then $(T+L)\cdot[F]=T\cdot [F]$ for any integer linear function $L:\mathbb{Z}^n\to\mathbb{Z}$.
\end{lemma}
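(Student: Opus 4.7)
The plan is to unwind the definition of the intersection product directly and exploit the fact that $L$, being genuinely $\mathbb{Z}$-linear on all of $\mathbb{R}^n$, restricts to the \emph{same} linear functional on every cone---not merely to a piecewise agreement. First, I would fix a representative $(F,\omega_F)$ of $[F]$ on which $T$ is linear on each cone. Since $L$ is linear globally, the sum $T+L$ is automatically linear on every cone of the same representative, so both $(T+L)\cdot[F]$ and $T\cdot[F]$ can be computed on the identical underlying weighted fan; in particular there is no issue of having to pass to a common refinement.

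Next, for each $\tau\in F^{(k-1)}$ I would compare the two weights by direct subtraction. Denoting by $L_\sigma$ and $L_\tau$ the restrictions of $L$ to $\langle\sigma\rangle$ and $\langle\tau\rangle$, the defining formula of the intersection product yields
\[
\omega_{(T+L)\cdot[F]}(\tau)-\omega_{T\cdot[F]}(\tau)=\sum_{\sigma>\tau}L_\sigma\bigl(\omega_F(\sigma)v_{\sigma/\tau}\bigr)-L_\tau\Bigl(\sum_{\sigma>\tau}\omega_F(\sigma)v_{\sigma/\tau}\Bigr).
\]
The crucial observation is that, unlike $T_\sigma$ and $T_\tau$ which are honest-to-goodness different slope pieces of a non-linear piecewise function, the restrictions $L_\sigma$ and $L_\tau$ are just one and the same linear form $L$ viewed on different subspaces. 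Consequently $L$ can be pulled out of the finite sum in the first term, and the two expressions become identical, giving
\[
L\Bigl(\sum_{\sigma>\tau}\omega_F(\sigma)v_{\sigma/\tau}\Bigr)-L\Bigl(\sum_{\sigma>\tau}\omega_F(\sigma)v_{\sigma/\tau}\Bigr)=0.
\]
Hence $\omega_{(T+L)\cdot[F]}=\omega_{T\cdot[F]}$ on every codimension-one cone, so the two cycles coincide as weighted fans, and therefore as elements of $Z^{\mathrm{aff}}_{k-1}(\mathbb{R}^n)$.

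As for obstacles: there is essentially none---this is a routine verification once the definition is unpacked. The only conceptual point I would flag is keeping straight that $T_\sigma$ and $T_\tau$ are a priori defined on the subspaces $\langle\sigma\rangle$ and $\langle\tau\rangle$ rather than on all of $\mathbb{R}^n$, and checking that the balancing condition for $F$ guarantees $\sum_{\sigma>\tau}\omega_F(\sigma)v_{\sigma/\tau}\in\langle\tau\rangle$, so that the evaluation $L_\tau(\,\cdot\,)$ in the second term is well-defined. Note however that balancing itself plays no role in the cancellation above; the identity rests only on the global linearity of $L$.
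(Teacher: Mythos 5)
Your proof is correct: since $L$ is a single globally linear form, the extra terms $\sum_{\sigma>\tau}L(\omega_F(\sigma)v_{\sigma/\tau})$ and $L\bigl(\sum_{\sigma>\tau}\omega_F(\sigma)v_{\sigma/\tau}\bigr)$ coincide and cancel, and you rightly note that balancing is only needed to make the $T_\tau$-term well-defined. The paper gives no proof of its own, simply citing Remark 3.6 of Allermann--Rau, and your computation is exactly the standard argument behind that remark.
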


\begin{rem}
    Lemma 2.7 allows us to reduce our study from arbitrary tropical regular functions to non-negative tropical regular functions:
    $$\max(l_1,l_2,\ldots,l_k)\cdot [F]=(\max(l_1,l_2,\ldots,l_k)-l_i)\cdot [F]=\max(l_1-l_i,\ldots,0,\ldots,l_k-l_i)\cdot[F]$$
    for any linear functional $l_i$ of $T$.
\end{rem}

\begin{defin}[Regular cycles]
    An affine tropical $k$-cycle $[F]$ is called \textit{regular} if there exist non-negative TR functions $T_1,\ldots,T_k$ such that $T_1\cdot\cdots\cdot T_k\cdot[F]=1$. The corresponding tuple $(T_1,\ldots,T_k)$ is called a \textit{regular sequence}.
\end{defin}

\begin{rem}
    For a regular $k$–cycle $[F]$ the tropical complete intersection $$T_{1}\cdot\cdots\cdot T_{k}\cdot [F]$$ is the zero cycle of multiplicity~$1$. In classical algebraic geometry a $0$–dimensional scheme is called reduced when its multiplicity equals~$1$; in the same spirit, a regular $k$–cycle supports a reduced $0$–dimensional tropical complete intersection.
\end{rem}

\begin{defin}[Stable Intersection]
        Let $\Sigma_1,\Sigma_2$ be two pure weighted balanced polyhedral complexes in $\mathbb{R}^n$. We define the stable intersection between $\Sigma_1$ and $\Sigma_2$ to be
    $$\Sigma_1\cap_{st}\Sigma_2:=\bigcup_{\substack{\sigma_1\in\Sigma_1, \sigma_2\in\Sigma_2\\dim(\sigma_1)+dim(\sigma_2)=n}}\sigma_1 \cap \sigma_2,$$
    with multiplicity among the top-dimensional cells $\sigma_1\cap\sigma_2$:
    $$\omega_{\Sigma_1\cap_{st}\Sigma_2}(\sigma_1\cap\sigma_2)=\sum_{\tau_1,\tau_2}\omega_{\Sigma_1}(\tau_1)\omega_{\Sigma_2}(\tau_2)[\mathbb{Z}^n:\mathbb{Z}^n_{\tau_1}+\mathbb{Z}^n_{\tau_2}],$$
    where the sum is over all $\tau_1\in$ star$(\sigma_1\cap\sigma_2)_{\Sigma_1}$, $\tau_2\in$ star$(\sigma_1\cap\sigma_2)_{\Sigma_2}$ with $\tau_1\cap(\textbf{v}+\tau_2)\neq\emptyset$ for some generic $\textbf{v}$. Lemma 3.6.4 of \cite{MaclaganSturmfels15}  shows that it is independent of choice of $\textbf{v}$.
\end{defin}

The next theorem is a central ingredient of the paper: it shows that stable intersection coincides with the tropical intersection product.

\begin{theor}[{\cite[Theorem 4.4]{Katz09} }]
    Let $T$ be a TR function and let $[F]$ be a $k$-cycle. Then the tropical intersection product admits tropical stable intersection:
    $$T\cdot [F]=[V(T)\cap_{st} F]$$
    for any representative $F$ of $[F]$.\\
\end{theor}

\begin{rem}[TR functions vs. Polytopes]
    Theorem 2.12 is a well-known result that serves as a nice bridge between the tropical intersection theory and the polyhedral geometrical intersection. Let $T$ be a non-negative TR function defined by $\max(0,l_1^\vee,\ldots,l_k^\vee)$ over $\mathbb{R}^n$ and let $P$ be a polytope defined by $\mathrm{conv}(0,l_1,\ldots,l_k)$. Note that $T$ is the support function of $P$ and we define $[P]$ to be the dual fan of $P$. Then for an arbitrary $k$-cycle $[F]\in Z^{\mathrm{aff}}_k(\mathbb{R}^n)$, we have
    $$T\cdot [F]=[V(T)\cap_{st} F]=[[P]\cap_{st} F]=:[P]\cdot [F].$$
    
\end{rem}

The following lemma will be applied frequently.

\begin{lemma}[{\cite[Projection formula]{AllermannRau10}}]
   Let $C\in Z^{\mathrm{aff}}_k(\mathbb{R}^n)$ be an affine $k$-cycle and let $f:\mathbb{R}^n\to\mathbb{R}^m$ be an integer linear map such that $f_*(C)\in Z^{\mathrm{aff}}_k(\mathbb{R}^m)$. Let $T$ be a TR function over $\mathbb{R}^m$. Then the following holds:
   $$T\cdot(f_*C)=f_*(f^*T\cdot C).$$
\end{lemma}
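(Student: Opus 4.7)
The plan is to translate the identity into a statement about stable intersections via Theorem 2.12, and then verify it by combining the generic-translate description of stable intersection with the lattice-index formula defining the pushforward. Working directly from Definition 2.7 would force me to match the two piecewise-linear weight computations $T_\sigma$ and $(f^*T)_{\tilde\sigma}$ on every codimension-one cone, which is possible but bookkeeping-heavy; the stable-intersection route makes the geometric content transparent and isolates the combinatorial content as a single statement about lattice indices.

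First I would choose a representative fan $F$ of $C$ fine enough that $f^*T := T\circ f$ is linear on each cone of $F$, that the images $f(\sigma)$ form a fan $f(F)$ in $\mathbb{R}^m$, and that $T$ is linear on each cone of $f(F)$; such an $F$ always exists by taking a simultaneous refinement of the relevant domains of linearity. With this $F$, both sides of the claim are well-defined $(k-1)$-cycles, and Theorem 2.12 converts the identity to
\[
V(T)\cap_{st} f_*F \;=\; f_*\bigl(V(f^*T)\cap_{st} F\bigr),
\]
using $V(f^*T)=f^{-1}(V(T))$ on the right. The set-theoretic equality is then verified by picking a generic $v\in\mathbb{R}^m$ together with a lift $w\in\mathbb{R}^n$ with $f(w)=v$, so that $V(f^*T)+w=f^{-1}(V(T)+v)$; the map $f$ carries $(V(f^*T)+w)\cap F$ onto $(V(T)+v)\cap f_*F$, while contributions from facets on which $f$ is non-injective vanish under the pushforward.

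The main obstacle is the weight matching on each transverse cell. The left-hand weight at $\sigma\cap(V(T)+v)$ is $\omega_{f_*F}(\sigma)\cdot[\mathbb{Z}^m:\mathbb{Z}^m_\sigma+\mathbb{Z}^m_{V(T)}]$, while the right-hand side aggregates, over facets $\tilde\sigma\in F$ with $f(\tilde\sigma)=\sigma$ and $f|_{\tilde\sigma}$ injective, the weight $\omega_F(\tilde\sigma)\cdot[\mathbb{Z}^n:\mathbb{Z}^n_{\tilde\sigma}+\mathbb{Z}^n_{V(f^*T)}]$ scaled by the pushforward lattice index along $f$. Using the defining formula $\omega_{f_*F}(\sigma)=\sum_{\tilde\sigma}\omega_F(\tilde\sigma)\cdot[\mathbb{Z}^m_\sigma:f(\mathbb{Z}^n_{\tilde\sigma})]$, the equality reduces to a purely lattice-theoretic multiplicativity of indices obtained by applying $f$ to the inclusion $\mathbb{Z}^n_{\tilde\sigma}+\mathbb{Z}^n_{V(f^*T)}\hookrightarrow\mathbb{Z}^n$ and comparing with the image inclusion $\mathbb{Z}^m_\sigma+\mathbb{Z}^m_{V(T)}\hookrightarrow\mathbb{Z}^m$. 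Verifying this identity on a single transverse cell via a snake-lemma computation, and then summing over preimages $\tilde\sigma$ of each $\sigma$, completes the proof.
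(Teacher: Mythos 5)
The paper does not prove this lemma at all: it is imported verbatim from Allermann--Rau (their Proposition~4.8), so there is no in-paper argument to compare against. Your proposal is nevertheless a legitimate proof strategy, and it differs from the standard one. Allermann--Rau argue directly from the definitions: they choose a representative fan on which $f$ is a map of fans and $f^*T$ is cone-wise linear, and then match the weight formula of Definition~2.7 against the pushforward weights $\omega_{f_*F}(\sigma)=\sum_{\tilde\sigma}\omega_F(\tilde\sigma)\,[\mathbb{Z}^m_\sigma:f(\mathbb{Z}^n_{\tilde\sigma})]$ codimension-one cone by cone; no stable intersection is involved. Your route instead converts both sides via Theorem~2.12 and does the bookkeeping with generic translates and lattice indices; this is closer in spirit to the Maclagan--Sturmfels/Katz treatment and does make the geometry more transparent, at the cost of relying on Theorem~2.12 (which the paper only states for the product of a single TR function with a cycle, so you should check it applies to $f^*T$ on $C$ as well as to $T$ on $f_*C$). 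Two points deserve more care than your sketch gives them. First, $V(f^*T)=f^{-1}(V(T))$ is not a set-theoretic identity in general: if $f$ is far from surjective, $T\circ f$ can be linear along $f(x)+\mathrm{im}(f)$ even when $f(x)$ lies in the corner locus of $T$, so the equality only holds as weighted cycles after the weights are taken into account; since your argument ultimately runs through the weights, this is reparable but should be stated correctly. Second, your dismissal of facets on which $f$ drops dimension is right but for a slightly subtler reason than injectivity of $f|_{\tilde\sigma}$: such a facet can still meet a translate of $V(f^*T)$ transversally in a $(k-1)$-cell, and one must check that the image of that cell has dimension at most $k-2$ and therefore dies in the pushforward. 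With those two points tightened, the lattice-index identity you reduce to is standard and the proof goes through.
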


We include the following definition, which is not used in our text, but explains the precise statement of Question 1.2 motivating our work.

\begin{defin}[Regular fibration]
    A projection $p:\R^n\to\R^m$ defines a {\it regular fibration} of a tropical fan $F\subset\R^n$ over its set-theoretical image $G:=p(|F|)$, if 

1) $G$ has a structure of a regular tropical fan (which defines the central fiber $C$ of $p|_F$ as a tropical fan),

2) The tropical fan $C$ is regular.
\end{defin}

\begin{rem}
   Any regular $k$-cycle $[F]$ over $\mathbb{R}^n$ gives a regular fibration of a tropical fan $\tilde{F}$ over $\mathbb{R}^{n+1}$. For example, let $T_1,\ldots,T_k$ be a regular sequence of $[F]$ over $\mathbb{R}^n$ and let $\psi$ be an arbitrary TR function. Then \cite{Shaw13} gives a $k$-cycle $\tilde{F}$ over $\mathbb{R}^{n+1}$ such that
   $$\psi\cdot [F]=[\tilde{F}\cap_{st}V(\max(0,e_{n+1}))].$$
   Therefore, we get
   $$\delta^*T_1\cdot\cdots\cdot\delta^*T_{k}\cdot\tilde{F}=T_1\cdot\cdots\cdot T_{k}\cdot\delta_*(\tilde{F})=T_1\cdot\cdots\cdot T_k\cdot F=1$$
   where $\delta:\mathbb{R}^{n+1}\to\mathbb{R}^{n+1}/e_{n+1}$ sending $\tilde{F}$ to $F$ is called the elementary modification (see \cite[Definition 2.10]{Shaw13}). Although the elementary modification defines a general fibration with base $F$ and a central fibre $\langle e_{n+1}\rangle$ (they are even regular), $\delta$ is not a regular fibration. This is because we require $p(|F|)$ to be equipped with a structure of a tropical fan whose dimension is strictly smaller than $F$ if $p$ is a regular fibration of $F$.
\end{rem}

    

Although Bergman fans will play only a minor role in this paper—they are needed explicitly only in  Theorem 1.1—we briefly recall the definition and point the reader to standard references for details.  Background material can be found in
\cite{Fink,ArdilaKlivans06}, where the combinatorial and geometric properties of Bergman fans (also called {tropical linear spaces}) are developed in depth.

\begin{defin}
    Let $M=(E,\mathcal{B})$ be a loopfree matroid of rank r$(M)$ with ground set $E=[n]$. The Bergman fan $\mathcal{B}(M)$ is the fan
    $$\mathcal{B}(M):=\{w\in\mathbb{R}^n\mid\ M_w\ \rm{loopfree}\}$$
    where $M_w$ is the associated matroid whose bases are the $w$-minimum bases of $M$.
\end{defin}

Let $N$ be a lattice and let $[F]$ be a tropical $k$-cycle in $N_{\mathbb R}$.
We say that $[F]$ is the \textit{direct sum} of two $k$-cycles $[X]$ and $[Y]$ if $[F]=[X]+[Y]$ in the sense of the addition in the group $Z^{\mathrm{aff}}_k(N_\mathbb{R})$ (see \cite{AllermannRau10}) and $N = N_X \oplus N_Y$, where $N_X$ (respectively $N_Y$) is the minimal sublattice of $N_{\mathbb R}$ whose ambient space contains $X$ (respectively $Y$).

\begin{lemma}
     Assume $X=\bigoplus_{i=1}^r B_i$ to be a direct sum of 1-D Bergman fans $B_i$ over $\mathbb{R}^n$. Then the TR function $M$ satisfies $M\cdot [X]=1$ if and only if $\mathrm{Newt}(M)$ can be shifted to the standard simplex in one of the spaces $\langle B_i\rangle$.
\end{lemma}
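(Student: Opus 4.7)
The plan is to combine multilinearity of the tropical intersection product with the extreme simplicity of a $1$-dimensional Bergman fan of a rank $1$ loopfree matroid: such a $B_i$ is just the line $\langle B_i\rangle$ with two rays $\pm v_i$ of weight $1$, where $v_i$ generates the rank-$1$ sublattice $\mathbb{Z}_{\langle B_i\rangle}$. Bilinearity gives $M\cdot[X]=\sum_i M\cdot[B_i]$, and the formula of Definition~2.6 together with $M(0)=0$ and the balancing $v_i+(-v_i)=0$ reduces each term to
\[
M\cdot[B_i]\;=\;M(v_i)+M(-v_i)\;\ge\;0,
\]
the non-negativity coming from the assumption $M\ge 0$. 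Hence $M\cdot[X]=1$ is equivalent to the existence of a unique index $i_0$ with $M\cdot[B_{i_0}]=1$ and $M\cdot[B_j]=0$ for all $j\ne i_0$.

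The next step is to translate $M\cdot[B_j]=0$ into a statement about $\mathrm{Newt}(M)$. Writing $M=\max(l_1,\dots,l_k)$ with $l_p\in(\mathbb Z^n)^\vee$, the inequalities $l_p(v_j)\le M(v_j)=0$ and $-l_p(v_j)\le M(-v_j)=0$ together force $l_p(v_j)=0$ for every $p$; equivalently, every linear functional of $M$ annihilates the line $\langle B_j\rangle$. By Lemma~2.7, we may shift $\mathrm{Newt}(M)$ without changing the intersection, and so, relative to the dual decomposition $(\mathbb R^n)^\vee=\bigoplus_i\langle B_i\rangle^\vee$ induced by $\mathbb R^n=\bigoplus_i\langle B_i\rangle$, the polytope $\mathrm{Newt}(M)$ lies entirely in the single $1$-dimensional summand corresponding to $\langle B_{i_0}\rangle$.

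The remaining condition $M\cdot[B_{i_0}]=1$ is now a purely $1$-dimensional statement. Identifying that summand with $\mathbb Z$ via the pairing with $v_{i_0}$, the lattice polytope $\mathrm{Newt}(M)$ is a segment $[a,b]$, and $M(v_{i_0})+M(-v_{i_0})=b-a$ is just its lattice length. Requiring the length to be $1$ is exactly the condition that $\mathrm{Newt}(M)$ is the standard $1$-simplex inside $\langle B_{i_0}\rangle$ after the chosen shift. The converse direction is handled by running the same computation forwards: if $\mathrm{Newt}(M)$ is a shift of the standard simplex inside some $\langle B_i\rangle$, then $M$ vanishes on the other $\langle B_j\rangle$'s and contributes $M(v_i)+M(-v_i)=1$ on $B_i$, giving $M\cdot[X]=1$.

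The only delicate point in the plan is the implication ``$M(v_j)=M(-v_j)=0\ \Rightarrow\ l_p|_{\langle B_j\rangle}\equiv 0$ for every $p$''; this genuinely uses both non-negativity of $M$ and the fact that $M$ is a maximum of linear functionals with $l_p(0)=0$. Once this step is in place, the remainder of the argument is bookkeeping with the direct sum decomposition $\mathbb R^n=\bigoplus\langle B_i\rangle$ and its dual.
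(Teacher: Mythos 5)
Your opening reduction --- writing $M\cdot[X]=\sum_i M\cdot[B_i]$, observing that each summand is non-negative (after normalising $M$ to be non-negative via Lemma~2.7), and concluding that exactly one summand $B_{i_0}$ contributes $1$ while the rest contribute $0$ --- is exactly the first half of the paper's proof. But the rest of your argument rests on a misreading of what a $1$-dimensional Bergman fan is in this paper. You take $B_i$ to be ``the line $\langle B_i\rangle$ with two rays $\pm v_i$,'' i.e.\ the Bergman fan of a rank-$1$ matroid. In the sense used here (see the proofs of Lemmas~3.14 and~3.15), a $1$-D Bergman fan is the standard tropical line: up to a lattice isomorphism its rays are $e_1,\dots,e_k$ and $-e_1-\cdots-e_k$ in $\mathbb{Z}^k$, so $\langle B_i\rangle$ has dimension $k$, which can be arbitrary. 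Your two-ray picture is only the case $k=1$, and under that reading the lemma's conclusion (``the standard simplex in $\langle B_i\rangle$'') would degenerate to a unit segment --- contradicting the way the lemma is invoked before Theorem~3.16, where the relevant $M$'s have Newton polytopes that are simplices of dimension $|P_i|$ (Theorem~3.11, Corollary~3.12).

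Concretely, two steps fail. First, $M\cdot[B_j]=\sum_{\rho}\omega_\rho M(u_\rho)$ runs over all $k+1$ rays of $B_j$, not over $\pm v_j$; your deduction that every linear functional of $M$ annihilates $\langle B_j\rangle$ can be repaired (each $M(u_\rho)=0$ forces $l_p(u_\rho)\le 0$ for all rays, balancing $\sum_\rho u_\rho=0$ forces equality, and the rays span $\langle B_j\rangle$), but as written it uses only the two rays $\pm v_j$. Second, and more seriously, the final step --- identifying $\mathrm{Newt}(M)$ with a lattice segment $[a,b]$ and reading off ``lattice length $=1$'' --- is simply not available when $\dim\langle B_{i_0}\rangle>1$: there the condition $M\cdot[B_{i_0}]=1$ for a Newton polytope living in a $k$-dimensional space is precisely the nontrivial content, and the paper disposes of it by citing Fink's degree-one characterisation (Theorem~6.5 and Definition~3.3 of \cite{Fink}) rather than by a length computation. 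So the proposal proves only the special case in which every summand $\langle B_i\rangle$ is a line, and is missing the key input (Fink's theorem, or an equivalent direct argument for the multi-ray tropical line) needed for the general statement.
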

\begin{proof}
    Note that
$$M\cdot[X]=\sum_{i=1}^q\omega_iM(r_i)=\sum_{j=1}^rM\cdot B_j=1$$
    where $\{r_1,\ldots,r_q\}$ is the collection of primitive generators of $[X]$. Lemma 2.7 yields that each term $M\cdot B_j\ge 0$. Therefore there exists a unique 1-D Bergman fan $B_k$ whose intersection number is non-zero. Then it follows direct from Theorem 6.5 and Definition 3.3 of \cite{Fink}.
\end{proof}

Finally, let's introduce a combinatorial structure of tropical fans and cycles called a \textit{gallery} which serves as a key object in our story. We refer to a piecewise linear function of the form $\max(l,h)$ for some $l,h\in(\mathbb{Z}^n)^\vee$ as a tropical binomial.

\begin{defin}[gallery of a fan]
Let $[F]$ be a regular $k$-cycle in $\mathbb{R}^n$ and let $(L_1,\ldots,L_k)$ be a regular sequence of $[F]$ such that $L_i$'s are tropical binomials for all $i\in\{1,\ldots,k\}$. Then for any representative $F$ of $[F]$, the {\it gallery} of $F$ over $\{L_1,\dots,L_k\}$ is the set of faces:
\begin{center}
    $g(F,L_1,\dots,L_k):=\{\sigma\in F^{(k)}\mid L_1\cdot\cdots\cdot L_k\cdot[\sigma]=1\}.$
\end{center}

We denote the set of all galleries of $F$ by $\rm{Gly}(F)$.
\end{defin}

The following lemma can be immediately obtained from the polyhedral geometry of fans:

\begin{lemma}
    Let $L_1\cdot\cdots\cdot L_k\cdot[F]=1$ where $L_i$'s are tropical binomials and let $F,G\in[F]$ be two representatives of $[F]$. Then the supports of the galleries of $F,G$ over $\{L_1,\ldots,L_k\}$ coincide.
\end{lemma}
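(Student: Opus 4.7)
The plan is to reduce to the case that one representative refines the other, by passing to a common refinement, and then to close the argument with an elementary dimension observation about linear spans of top-dimensional cones.

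First I would choose a common refinement $H$ of $F$ and $G$, which exists because $F$ and $G$ lie in the same equivalence class $[F]$. By symmetry it suffices to prove that $|g(F, L_1, \dots, L_k)| = |g(H, L_1, \dots, L_k)|$, since the analogous identity for $G$ and $H$ then gives $|g(F)| = |g(G)|$ as desired.

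Next I would exploit the following elementary fact about refinements of pure $k$-dimensional fans: every top-dimensional cone $\tau \in H^{(k)}$ is contained in a unique facet $\sigma \in F^{(k)}$, and since both $\tau$ and $\sigma$ are $k$-dimensional with $\tau \subseteq \sigma$, one has $\langle\tau\rangle = \langle\sigma\rangle$. Consequently the defining condition $\mathrm{dim}(L + \langle\tau\rangle) = n$ holds precisely when $\mathrm{dim}(L + \langle\sigma\rangle) = n$, so $\tau \in g(H,L_1,\dots,L_k)$ if and only if the containing $\sigma$ lies in $g(F,L_1,\dots,L_k)$.

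Finally, since $H$ refines $F$, every facet $\sigma \in F^{(k)}$ is the union of the facets $\tau \in H^{(k)}$ contained in it. Taking unions over the galleries on both sides then yields
\[
  |g(F,L_1,\dots,L_k)| = \bigcup_{\sigma \in g(F)} \sigma = \bigcup_{\substack{\sigma \in g(F) \\ \tau \subseteq \sigma, \, \tau \in H^{(k)}}} \tau = \bigcup_{\tau \in g(H)} \tau = |g(H,L_1,\dots,L_k)|,
\]
completing the proof. There is no real obstacle here: the argument uses only the fact that refinement preserves linear spans of top-dimensional cones, and makes no appeal to the balancing condition, the regularity of $[F]$, or the binomial structure of the $L_i$. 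The role of the binomial hypothesis and of regularity is only to make the gallery a meaningful invariant in the first place.
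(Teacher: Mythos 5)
Your proof is correct and is precisely the ``immediate from the polyhedral geometry of fans'' argument the paper has in mind: the paper states this lemma without writing a proof, and your common-refinement argument (each top-dimensional cone of a refinement lies in a unique facet of the coarser fan and spans the same $k$-dimensional subspace, so the condition $\dim(L+\langle\sigma\rangle)=n$ and hence the gallery's support are refinement-invariant) is exactly the omitted verification. The only caveat --- inherited from the paper's definitions rather than introduced by you --- is that facets of weight zero, which Definition 2.2 permits and which equivalent representatives need not share, would have to be excluded from the gallery for the statement to be literally invariant under passing between representatives.
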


\begin{defin}[gallery of a cycle]
    Let $[F]$ be a regular $k$-cycle in $\mathbb{R}^n$ and let $(L_1,\ldots,L_k)$ be a regular sequence of $[F]$ such that $L_i$'s are tropical binomials for all $i\in\{1,\ldots,k\}$. Then the gallery of $[F]$ over $\{L_1,\ldots,L_k\}$ is the support of $g(F,L_1,\ldots,L_k)$ for any representative $F\in[F]$. Similarly, we denote the set of all galleries of $[F]$ by $\mathrm{Gly}([F])$.\\
\end{defin}

\section{Full classification for regular $1$-cycles}

One-dimensional tropical cycles are the basic building blocks of tropical geometry.  
They form the simplest non-trivial class of tropical cycles and already capture many key features of tropical intersection theory and matroid theory.  
Let $M$ be a non-negative tropical regular (TR) function on $\mathbb Z^{n}$ and let $F\subset\mathbb R^{n}$ be a $1$-cycle.

A $1$-cycle has a single representative with positive weights, so it is the same object as a one-dimensional tropical fan.  
Throughout this section we work exclusively with one-dimensional tropical fans, not with $1$-cycles, in order to highlight the fan structure.

Throughout, we assume that
$$
  F \;=\;
  \bigl(\{\mathbf 0,\;\mathbb R_{+}v_{1},\ldots,\mathbb R_{+}v_{q}\},\,
         \omega_{i}\bigr)
$$
is a one-dimensional tropical fan in $\mathbb R^{n}$ with primitive generators $v_{1},\ldots,v_{q}$ and positive weights $\omega_{i}$. Moreover, we also assume that the ambient space $\langle F\rangle$ is exactly $\mathbb{R}^n$.

The goal of this section is to classify all pairs $(M,F)$ satisfying
$$
  M\!\cdot\!F \;=\; 1
$$
such that $M$ is a non-negative tropical regular function and $F$ is the 1-dimensional tropical fan introduced above.

We state two main theorems for a 1-dimensional regular tropical fan $F$.  
The gallery-existence theorem gives a coordinate description of $F$, and the ‘minimal-model programme' for one-dimensional fans explains the relationship between $F$ and Bergman fans via a universal property.  
We also describe the full set of non-negative TR functions $M$ whose tropical intersection product with $F$ equals $1$.

The following theorem is the first basic result toward the classification of one-dimensional regular tropical fans.

\begin{theor}[Gallery-existence theorem]
   If there exists a non-negative TR function $M$ such that $M\cdot F=1$, then $\mathrm{Gly}(F)\neq\emptyset$.  Moreover, there is a suitable coordinate system such that fan $F$ has the form:

    $v_1=(1,0,\dots,0)$, $\omega_1=1$;

    $v_2=(-1,0,\dots,0,m)$, $\omega_2=1$;

    $v_i\in (1,0,\dots,0)^\perp$.
\end{theor}

\begin{proof}
    The complete intersection $1$ yields $\sum_{i=1}^q\omega_iM(v_i)=1$, and the non-negativity of each part $\omega_iM(v_i)$ implies that there exists a unique index (say $v_1$) such that $\omega_1M(v_1)=1$ and $\omega_iM(v_i)=0$ for $i>1$.

    On the one hand, $\omega_1M(v_1)=1$ tells us that $\omega_1=1$ and $M(v_1)=1$. Thus there exists at least one binomial $\max(0,l)\le M$ such that $l(v_1)=1$.

    On the other hand, balancing condition of $F$ implies
    $0=\sum_{i=1}^q\omega_iv_i.$
    So we have
    $$l(\sum_{i=1}^q\omega_iv_i)=\sum_{i=1}^q\omega_il(v_i)=0=\omega_1l(v_1)+\sum_{j=2}^q\omega_jl(v_j).$$
    Therefore $-1=\sum_{j=2}^q\omega_jl(v_j)$. Note that $l(v_j)\le M(v_j)=0$ and $\omega_j\ge 1$, hence there exists a unique index (say $v_2$) such that $\omega_2l(v_2)=-1$ and $\omega_jl(v_j)=0$ for all $j>2$. Therefore, the set $\{\mathbb{R}_+\cdot v_1,\mathbb{R}_+\cdot v_2\}$ is a set of gallery over $\max(0,l)$ as required.

    Finally, let's prove the desired coordinate exists. Notice that $l:\mathbb{Z}^n\to\mathbb{Z}$ is a $\mathbb{Z}$ linear function, and the image $\mathrm{Im}(l)=\mathrm{span}_\mathbb{Z}(l(v_1))\cong \mathbb{Z}$ which is a free $\mathbb{Z}$ module. Hence $\mathbb{Z}^n/\mathrm{ker}(l)\cong \mathrm{Im}(l)\cong \mathbb{Z}$ gives that $\mathrm{ker}(l)$ is a rank $n-1$ free $\mathbb{Z}$ module. Now let $w_2,\dots,w_n$ be an $\mathbb{Z}$-basis of $\mathrm{ker}(l)$, then the integer matrix $T:=[l,w_2,\dots,w_n]^T\in \mathrm{GL}(n,\mathbb{Z})$. Therefore, $T(v_1)=(1,0,\dots,0)$ with weight $1$ as required. The coordinate statement of $v_2$
    can be directly obtained by applying a $T'\in \mathrm{GL}(n,\mathbb{Z})$ on $\mathrm{ker}(l)$. Since $\omega_jl(v_j)=0$ for all $j>2$, this gives us that $v_j\in \mathrm{ker}(l)=x_1^\perp$ for all $j>2$ as required.\\
\end{proof}

\subsection{Canonical partition}

Our key target of this subsection is to show the following theorem:

\begin{theor}
    Consider the relation $\sim$ on the collection of rays $\{\mathbb{R}_+\cdot v_1,\ldots,\mathbb{R}_+\cdot v_q\}$ such that $\mathbb{R}_+\cdot v_i\sim\mathbb{R}_+\cdot v_j$ if and only if $i=j$ or $\{\mathbb{R}_+\cdot v_i,\mathbb{R}_+\cdot v_j\}\in\mathrm{Gly}(F)$. Then $\sim$ is an equivalence relation.
\end{theor}

The following lemma can also be viewed as a definition of a 1-dimensional gallery. Recall that a gallery $g(F,L)$ can be explicitly defined by a linear function $l\in(\mathbb{Z}^n)^\vee$ such that $V(L)=l^\perp$.  

\begin{lemma}
   $g(F,L)$ is the set of two rays $\{\mathbb{R}_+\cdot v_a,\mathbb{R}_+\cdot v_b\}$ and the following conditions hold:

    $\bullet$ $l(v_a)=1$, $l(v_b)=-1$ and $\omega_a=\omega_b=1$;\

    $\bullet$ $l(v_i)=0$ for other generators $i\neq a,b$.
\end{lemma}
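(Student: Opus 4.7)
The plan is to unfold the definition of a one-dimensional gallery and combine two inputs—the intersection-product equation $L\cdot F=1$ and the balancing relation $\sum_i\omega_i v_i=0$—to force the ray structure. Throughout I work with $L$ normalized to the form $\max(0,l)$, which is harmless by Lemma~2.7, so that $V(L)=l^{\perp}$ and $L(v_i)=\max(0,l(v_i))$. Under this normalization, by the definition of a gallery in codimension $k=1$, the ray $\mathbb{R}_+v_i$ lies in $g(F,L)$ if and only if $\dim(l^{\perp}+\mathbb{R} v_i)=n$, which is equivalent to $l(v_i)\neq 0$. Thus the conclusion of the lemma amounts to showing that exactly two indices $a,b$ have $l(v_i)\neq 0$, with the stated signs and with both weights equal to $1$.

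First, I would exploit the non-negativity of each term in the intersection number. Writing
\[
1 \;=\; L\cdot F \;=\; \sum_{i=1}^{q}\omega_i L(v_i),
\]
every summand $\omega_i L(v_i)$ is a non-negative integer, so exactly one index (call it $a$) contributes and contributes $1$. This gives $\omega_a=1$, $L(v_a)=1$, hence $l(v_a)=1$; and $L(v_i)=0$ for all $i\neq a$, which is the same as $l(v_i)\leq 0$ for $i\neq a$.

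Next I would feed this into balancing. Applying $l$ to $\sum_i\omega_i v_i=0$ yields $\sum_i\omega_i l(v_i)=0$, and separating off the index $a$ produces
\[
\sum_{i\neq a}\omega_i l(v_i)\;=\;-\omega_a l(v_a)\;=\;-1.
\]
Every term on the left is a non-positive integer and each $\omega_i\geq 1$, so a second index $b\neq a$ must be singled out with $\omega_b=1$, $l(v_b)=-1$, while $l(v_i)=0$ for all other $i\neq a,b$.

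Putting the two steps together, the set of rays with $l(v_i)\neq 0$ is exactly $\{\mathbb{R}_+v_a,\mathbb{R}_+v_b\}$, which by the reformulation at the start is precisely $g(F,L)$, with the stated values of $l$ and $\omega$. The argument presents no real obstacle—it is essentially a repackaging of the first half of the proof of Theorem~3.1. The only care needed is in the initial normalization to $\max(0,l)$ and in keeping the dimension condition defining the gallery (which records \emph{which} rays appear) separate from the intersection computation (which records the \emph{weighted contributions}).
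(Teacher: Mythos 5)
Your proof is correct and follows essentially the same route as the paper's: the paper likewise reduces to $1=\sum_i\omega_i\max(0,l)(v_i)$, invokes the non-negativity and balancing argument (citing the proof of Theorem~3.1, which you simply re-derive in place), and then translates the condition $l(v_i)\neq 0$ into membership in the gallery. No gaps.
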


\begin{proof}
    By Definition 2.19, $g(F,L)$ is the following set
    $$\{r\in F\mid L\cdot[r]=1\}.$$
    Let $L:=\max(0,l)$. Then we have $\max(0,l)(r)+\max(0,l)(-r)=1$ if and only if $r\in g(F,L)$. On the one hand, we have either $l(r)=1$ or $l(-r)=1$ for any $r\in g(F,L)$. On the other hand, the proof of Theorem 3.1 implies that there exists a unique pair of rays $\{\mathbb R_+\cdot v_a,\mathbb R_+\cdot v_b\}$ such that $l(v_a)=1$, $l(v_b)=-1$, $\omega_a=1=\omega_b$ and $l(v_i)=0$ for any other $i\neq a,b$ since $\max(0,l)\cdot F=1$. Combine these together, we conclude that $\{\mathbb R_+\cdot v_a,\mathbb R_+\cdot v_b\}$ is the gallery $g(F,L)$ as we required.
\end{proof}

Recall that we denote the set of all galleries in the fan $F$ by $\mathrm{Gly}(F)$. We adapt the following notation $g_l(v_a,v_b)$ instead of $g(F,L)$, in order for the reader to clearly see all the information recorded in the gallery:
\begin{center}
    $g_l(v_a,v_b)\in\mathrm{Gly}(F)$ with $l(v_a)=1$, $l(v_b)=-1$.
\end{center}
\begin{rem}
    The notation $g_l(v_a,v_b)$ is independent of the order of $\{v_a,v_b\}$ up to sign: If $g_l(v_a,v_b)\in\mathrm{Gly}(F)$, then 
    \begin{center}
    $g_{-l}(v_b,v_a)\in\mathrm{Gly}(F).$\\ 
    \end{center}
   
\end{rem}

Now we are going to show that $F$ has a canonical partition with respect to the galleries of $F$.

\begin{lemma}
    If $g_l(u_1,u_2),g_h(u_1,u_3)\in\mathrm{Gly}(F)$ and $u_2\neq u_3$, then $g_{f}(u_2,u_3)\in\mathrm{Gly}(F)$ for some $f\in(\mathbb{Z}^n)^\vee$.
\end{lemma}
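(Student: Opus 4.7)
The obvious candidate for the witness is $f:=h-l$, and the plan is to verify directly that it satisfies the characterisation of a gallery functional given by Lemma~3.3. Reading off the data of the two hypothesised galleries, one has $l(u_1)=1$, $l(u_2)=-1$ and $l(v_i)=0$ for every other generator $v_i$, while $h(u_1)=1$, $h(u_3)=-1$ and $h(v_i)=0$ otherwise, and the weights $\omega_{u_2}=\omega_{u_3}=1$ are already forced by the hypothesis. Since $u_2\neq u_3$, so that $h(u_2)=0$ and $l(u_3)=0$, a one-line computation yields
\begin{align*}
f(u_1)=0,\quad f(u_2)=1,\quad f(u_3)=-1,\quad f(v_i)=0 \text{ for all remaining generators } v_i.
\end{align*}

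In particular $f\neq 0$, so $L:=\max(0,f)$ is a genuine tropical binomial. Plugging the values above into the intersection formula (equivalently, Theorem~2.12) gives
$$L\cdot F \;=\; \sum_{i=1}^{q}\omega_i\,\max\bigl(0,f(v_i)\bigr) \;=\; \omega_{u_2}\cdot 1 \;=\; 1,$$
so $L$ is a length-one regular sequence on $F$. By Definition~2.19 the corresponding gallery consists precisely of those rays on which $f$ does not vanish, and this is exactly $\{\mathbb{R}_+u_2,\mathbb{R}_+u_3\}$ with the required values of $f$ and weights $1$. In the notation introduced after Lemma~3.3, this is $g_f(u_2,u_3)\in\mathrm{Gly}(F)$, as desired.

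The entire argument is a direct bookkeeping verification once the correct linear form is guessed, so there is no substantial obstacle. The only subtlety is the sign choice: one needs $f=h-l$ rather than $h+l$, so that the contributions of $u_1$ cancel while the contributions on $u_2$ and $u_3$ survive with opposite signs. Non-vanishing of $f$ on $\{u_2,u_3\}$ is what simultaneously guarantees that $L$ is an honest binomial and that the gallery has the claimed support, and it is visibly built into the construction.
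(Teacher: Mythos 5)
Your proof is correct and follows essentially the same route as the paper: both take $f=h-l$, compute its values on the generators, and invoke the characterisation of Lemma~3.3. Your version is slightly more explicit in verifying $\max(0,f)\cdot F=1$, which the paper leaves implicit, but the underlying argument is identical.
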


\begin{proof}
    By definition, $l(u_1)=h(u_1)=1$, we set $f=h-l\in(\mathbb{Z}^n)^\vee$.  Then we have $(h-l)(u_2)=h(u_2)-l(u_2)=0-(-1)=1$, $(h-l)(u_3)=h(u_3)=-1$ and $(h-l)(u)=0$ for all generators $u\neq u_2,u_3$. Clearly $\omega_2=\omega_3=1$, hence the Lemma 3.3 holds. So $g_f(u_2,u_3)\in\mathrm{Gly}(F)$ as required.
\end{proof}

\begin{proof}[proof of Theorem 3.2]
   The reflexivity and symmetry come from the definitions directly, and the transitivity is given by Lemma 3.5.
\end{proof}

\begin{defin}
 We set $P_i$ to be the equivalence class of $\mathbb R_+\cdot v_i$ under the equivalence relation defined in Theorem 3.2.
\end{defin}

\begin{defin}[Canonical partition]
    We refer to a canonical partition of $F$ as the subset \textbf{P}$\subseteq\{1,2,\ldots,q\}$ such that 

    1. $i\in\textbf{P}$ if $P_i$ contains at least two elements.

    2. $P_i\cap P_j=\emptyset$ for all $i,j\in\textbf{P}$.
\end{defin}

As a consequence, we can split $F$ as
\begin{center}
 $\{$rays of $F\}$= $\{$non-gallery rays$\}$ $\dot\cup$ $\dot\bigcup_{i\in\mathbf{P}}P_i$.\\
\end{center}

\subsection{Tropical regular functions}

We will give a classification of all possible non-negative tropical regular functions $M$ such that $M\cdot F=1$.

\begin{defin}
    Consider the canonical partition $\mathbf{P}$ of $F$. For any $i\in\mathbf{P}$ we define $\overline{\mathscr{M}_i}$ to be the set of non-negative tropical regular functions $M$:
    $$\overline{\mathscr{M}_i}:=\{M\mid M\cdot F=1,\exists \mathbb
    {R}_+\cdot u\in P_i\ s.t.\ M(u)=1\}.$$
\end{defin}

We claim that
\begin{center}
   $\{$non-negative TR functions $M$ such that $M\cdot F=1\}$ $=$ $\bigcup_{i\in\mathbf{P}}\overline{\mathscr{M}_i}$.
\end{center}

Indeed, this is because any non-negative TR function $M$ reveals at least one gallery as Theorem 3.1 discussed. The support of this gallery belongs to one equivalence class $P_i$, which gives $M\in\overline{\mathscr{M}_i}$.\\

 The following subset of $\overline{\mathscr{M}_i}$ contains all significant information from $\overline{\mathscr{M}_i}$.
\begin{defin}
    For any $i\in\textbf{P}$ and $\mathbb{R}_+\cdot u\in P_i$, a subset $\mathscr{M}_i(u)$ of $\overline{\mathscr{M}_i}$ is defined by
    \begin{center}
 $\mathscr{M}_i(u):=\{M\mid M\cdot F=1,\ M(u)=1\}.$       
    \end{center}
\end{defin}

By the definitions above, it is clear that
$$\overline{\mathscr{M}_i}=\bigcup_{u:\ \mathbb{R}_+u\in P_i}\mathscr{M}_i(u).$$

Recall that the ambient space of $F$ is $\mathbb{R}^n$. The following lemma suggests a uniqueness of the tropical binomial over a gallery of $F$.
\begin{lemma}
    Let $g_l(v_i,v_j)=g_{h}(v_i,v_j)$ be a gallery of $F$. Then $l=h$.
\end{lemma}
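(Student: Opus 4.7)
The plan is to unpack the notation $g_l(v_i,v_j)$ via Lemma 3.3 and then exploit the standing assumption that the ambient space $\langle F\rangle$ equals $\mathbb R^n$, so that the primitive generators $v_1,\dots,v_q$ already span $\mathbb R^n$.

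First, I would recall from Lemma 3.3 exactly which data the symbol $g_l(v_i,v_j)\in\mathrm{Gly}(F)$ encodes: namely that $l(v_i)=1$, $l(v_j)=-1$, $\omega_i=\omega_j=1$, and $l(v_k)=0$ for every other primitive generator $v_k$ of $F$. Applying the same lemma to the equal gallery written as $g_h(v_i,v_j)$, we obtain the identical list of constraints for $h$: $h(v_i)=1$, $h(v_j)=-1$, and $h(v_k)=0$ for all $k\neq i,j$.

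Next, I would consider the difference $l-h\in(\mathbb Z^n)^\vee$. Evaluating on each primitive generator we get $(l-h)(v_i)=1-1=0$, $(l-h)(v_j)=-1-(-1)=0$, and $(l-h)(v_k)=0-0=0$ for every remaining $k$. Hence $l-h$ vanishes on the entire set $\{v_1,\dots,v_q\}$.

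Finally, I would invoke the standing assumption of Section 3 that the ambient space $\langle F\rangle=\mathbb R^n$, which means that $v_1,\dots,v_q$ span $\mathbb R^n$ as a real vector space. A linear functional vanishing on a spanning set is identically zero, so $l-h=0$ and therefore $l=h$. There is no real obstacle here; the content of the lemma is essentially just that the gallery notation $g_l(v_i,v_j)$ encodes $l$ on a spanning set of $\mathbb R^n$, so $l$ is uniquely determined by the gallery.
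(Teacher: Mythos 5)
Your proof is correct and rests on the same underlying fact as the paper's: the gallery conditions of Lemma 3.3 pin down the functional on the generators $v_1,\dots,v_q$, which span $\mathbb{R}^n$ by the standing assumption $\langle F\rangle=\mathbb{R}^n$, so the functional is uniquely determined. The paper reaches the same conclusion slightly more circuitously, by normalizing coordinates via Theorem 3.1 and exhibiting $l$ as the unique solution of a unimodular linear system; your direct argument with $l-h$ vanishing on a spanning set is a cleaner rendering of the same idea.
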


\begin{proof}
    Theorem 3.1 allows us to assume $v_i:=(1,0,\dots,0)^T$, $v_j:=(-1,0,\dots,q)^T$ and $v_k\in (1,0,\dots,0)^\perp$ for all $k\neq i,j$. Therefore, $v_i$ is part of an $\mathbb{Z}$-basis $\{v_i,w_1,w_2,\dots,w_{n-1}\}$ ($w_i\in (1,0,\ldots,0)^\perp$ for all $1\le i\le n-1$), and the linear functional $l$ satisfying $l(v_i)=1$, $l(v_j)=-1$, $l(v_k)=0$ is exactly the solution of
    $$[v_i,w_1,w_2,\dots,w_{n-1}]^T\textbf{x}=(1,0,\dots,0)^T.$$
    The uniqueness (and existence) of such linear functional comes from det$[v_i,w_1,\dots,w_{n-1}]=\pm 1$.
\end{proof}

\begin{theor}
    Fix an index $a\in\textbf{P}$, we assume $P_a=\bigcup_{i=0}^{|P_a|}\mathbb{R}_+\cdot v_i$ and denote $l_{ij}$ to be the unique functional $l_{ij}\in(\mathbb{Z}^n)^\vee$ such that $g_{l_{ij}}(v_i,v_j)\in P_a$. Then
    \begin{center}
        $\mathscr{M}_a(v_i):=\{\text{non-negative TR functions }M\mid0< M\le M_{max}(i)\}$
    \end{center}
    where $M_{max}(i):=\max(0,l_{i0},l_{i1},\dots,l_{i|P_a|})$ for any $i\in\{0,1,2,\dots,|P_a|\}$. Note that $\mathscr{M}_a(v_i)$ contains $|P_a|$ linear functionals since $l_{ii}$ is not well-defined.
\end{theor}

\begin{proof}
    $M_{max}(i)\in\mathscr{M}_a(v_i)$ since
    $$M_{max}(i)\cdot F=\sum_{j=1}^q\omega_jM_{max}(i)(v_j)=M_{max}(i)v_i=1.$$
        Now let's prove that $M_{max}(i)$ is exactly the maximal function among the $\mathscr{M}_a(v_i)$. By contradiction, if there exists a non-negative TR function $M'\in\mathscr{M}_a(v_i)$ for all $j\neq i$. such that $M'\not\subseteq M_{max}(i)$. Then we are allowed to assume $M_{max}(i)<M$. This is because the non-negative TR function $\max(M_{max}(i),M')$ is a regular sequence of $F$. To see this,
        $$\max(M_{max}(i),M')\cdot F=\sum_{j=1}^q\omega_j\max(M_{max}(i),M')(v_j)=\max(M_{max}(i),M')(v_i)=1$$
        since $\max(M_{max}(i),M')(v_j)=\max(M_{max}(i)(v_j),M'(v_j))=0 $. Therefore, the non-negative TR function $\max(M_{max}(i),M')$ is strictly larger than $M_{max}(i)$, so we can replace $M'$ by this $\max(M_{max}(i),M')$ as required. Hence we set such $M'\in\mathscr{M}_{a}(v_i)$ satisfying $M_{max}(i)<M'$.
        Then $\mathrm{Newt}(M_{max}(i))\subset \mathrm{Newt}(M')$. So we set $M'$ to be
    $$M':=max(0,l_{i0},l_{i1},\dots,l_{i|P_a|},l_1',l_2',\dots,l_k').$$
    This implies that $l'_j(v_i)\le 0$ for all $1\le j\le k$. Otherwise, say $l'_1(v_i)=1$. Then there exists a $\rho\neq v_i$ such that $l'_1(\rho)=-1$ and $l'(r)=0$ for any other $r\neq \rho,v_i$. This tells us that $g_{l'_1}(v_i,\rho)\in\mathrm{Gly}(F)$. Therefore, the canonical partition forces that $g_{l'_1}(v_i,\rho)\subseteq P_a$, so $\rho=v_j$ for some $j\neq i$. Lemma 3.10 ensures $l'_1$ equal to one of $\{l_{i0},l_{i1},\dots,l_{i|P_a|}\}$.

    Furthermore, $l_j'(r)\le 0$ for all $r\neq v_i$, otherwise say $l_j'(r')>0$, then $M'\cdot F\ge l_{i1}(v_i)+l_j'(r')>1$, contradiction. Balancing condition tells us that cone$(v_1,v_2,\dots,v_q)\cong \mathbb{R}^n$. So $l_j'(\mathbb{R}^n)\le 0$. Hence this holds only for $l_j'=0$. Therefore, $M_{max}(i)=M'$ as required.

    Finally, for any $0<M<M_{max}(i)$, $M(v_i)=1$ and $M(r)=0$ for all $r\neq v_i$. Hence $M\in\mathscr{M}_a(v_i)$ as required.
\end{proof}

\begin{coro}
    The linear functionals $l_{i0},\ldots,l_{i|P_a|}$ are linearly independent. Moreover, let $\pi:\mathbb Z^n\to\mathbb Z^{|P_a|}$ defined by the integer matrix $[l_{i0},l_{i1},\cdots,l_{i|P_a|}]^T$. Then $|\mathbb Z^{|P_a|}/\pi(\mathbb Z^n)|=1$.
\end{coro}

\begin{proof}
    Note that $\pi$ sends $v_i$ to $(1,0,\dots,0)\in\mathbb Z^{|P_a|}$, $v_1$ to $(-1,0,\dots,0)$ and $v_{|P_a|}$ to $(0,\dots,0,-1)$ as so on. Therefore, the image $\pi(\mathbb Z^n)$ is exactly the lattice $\mathbb Z^{|P_a|}$ and the independency holds immediately.
\end{proof}

\begin{rem}
    We close this subsection by concluding how to find all non-negative TR functions $M$ with $M\cdot F=1$ for a fixed 1-dimensional tropical fan: Given such fan $F$, we can read the canonical partition $\mathbf{P}$ of $F$ by Section 3.1; then we obtain all the $\mathscr{M}_a(v_i)$ for any $a\in\mathbf{P}$ and $\mathbb R_+\cdot v_i\in P_a$ by Theorem 3.11. The discussion of Definition 3.9 implies an explicit way to get $\overline{\mathscr{M}_a}$ from $\mathscr{M}_a(v_i)$ and finally
    \begin{center}
   $\{$non-negative TR functions $M$ such that $M\cdot F=1\}$ $=$ $\bigcup_{a\in\mathbf{P}}\overline{\mathscr{M}_a}$
\end{center}
gives all the possible non-negative TR functions $M$ with $M\cdot F=1$.\\
\end{rem}

\subsection{‘Minimal model programme' for 1-cycles}

We will prove Theorem 1.1 from the introduction in this subsection.
Given a canonical partition $\mathbf{P}$ on $F$, the ‘minimal model programme' for 1-dimensional fans will tell us that: each $P_i$ is mapped to a Bergman fan. Moreover, there exists a projection map sending $F$ to a direct sum of Bergman fans and any other projection like this is a factor through this one.

\begin{lemma}
    Let $\pi:\mathbb{Z}^n\to\mathbb{Z}^k$ be a linear map such that $\pi_*(F)$ is a 1-dimensional tropical fan. Then we call the fibre of $\pi_*(F)$ the set of rays of $F$ mapped to a ray of $\pi_*(F)$. If $\pi_*(F)$ is a $1$-D Bergman fan over $\mathbb R^k$, then the fibre of $\pi_*(F)$ is contained in some $P_i$ of $F$.
\end{lemma}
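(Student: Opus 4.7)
The plan is to exploit the regularity of the Bergman fan $B:=\pi(F)$: any two of its rays admit a tropical binomial witnessing a gallery between them, and these binomials can be pulled back along $\pi$ to yield galleries in $F$ between the corresponding rays of the fibre. As a first step I would normalize coordinates by a $\mathrm{GL}_k(\mathbb{Z})$ change of basis so that $B$ is the standard $1$-D Bergman fan with rays $\rho_1,\ldots,\rho_m$ of weight $1$ satisfying $\sum_a\rho_a=0$. The pushforward weight formula then forces a unique ray $v_a$ of $F$ in the fibre over each $\rho_a$, with $\omega_{v_a}=1$ and $\pi(v_a)=\rho_a$ primitively; otherwise the weight on $\rho_a$ in $\pi_*F$ would exceed $1$. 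Hence the fibre of $\pi(F)$ is exactly $\{\mathbb{R}_+\cdot v_1,\ldots,\mathbb{R}_+\cdot v_m\}$.

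Next, for each pair of distinct indices $a\neq b$, I would construct an integer linear functional $\ell_{ab}\in(\mathbb{Z}^k)^\vee$ by requiring $\ell_{ab}(\rho_a)=1$ and $\ell_{ab}(\rho_c)=0$ for $c\neq a,b$; the remaining vectors $\{\rho_c:c\neq b\}$ form a $\mathbb{Z}$-basis of the ambient lattice of $B$, so such integral $\ell_{ab}$ exists, and the balancing relation $\sum_c\rho_c=0$ forces $\ell_{ab}(\rho_b)=-1$. Then Lemma 3.3 yields $\max(0,\ell_{ab})\cdot B=1$. Applying the projection formula (Lemma 2.14),
\begin{equation*}
\pi_*\bigl(\max(0,\ell_{ab}\circ\pi)\cdot F\bigr)=\max(0,\ell_{ab})\cdot\pi_*F=1,
\end{equation*}
and since the left side is a $0$-cycle supported at the origin whose pushforward preserves weight, $\max(0,\ell_{ab}\circ\pi)\cdot F=1$.

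Finally, I would evaluate $\ell_{ab}\circ\pi$ on the rays of $F$: it takes the value $+1$ on $v_a$, $-1$ on $v_b$, and $0$ on every other ray (which either lies in the fibre over some $\rho_c$ with $c\neq a,b$ or collapses to the origin). By Lemma 3.3, the pair $\{\mathbb{R}_+\cdot v_a,\mathbb{R}_+\cdot v_b\}$ is a gallery of $F$, so $v_a\sim v_b$; iterating over all pairs and applying transitivity from Theorem 3.2 shows that $\{v_1,\ldots,v_m\}$ all lie in a single equivalence class $\phi(P_s)$. The main obstacle is the first step: leveraging the Bergman hypothesis together with the pushforward weight formula to pin down the fibre to exactly one ray per ray of $B$, since otherwise multiple rays of $F$ mapping to the same $\rho_a$ would have to be handled separately in the gallery argument.
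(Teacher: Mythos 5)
Your proposal is correct and follows essentially the same route as the paper: normalize $\pi(F)$ to the standard $1$-D Bergman fan, pull back the gallery binomials through $\pi$ via the projection formula to obtain galleries of $F$ between fibre rays, and conclude by transitivity (Lemma 3.5/Theorem 3.2). Your explicit justification that the weight-$1$ condition on the Bergman rays forces a unique fibre ray over each $\rho_a$ is a point the paper's proof leaves implicit in its use of the notation $\pi^{-1}(r_i)$, and is a welcome addition.
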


\begin{proof}
    Let's assume the generators of Bergman fan $\pi_*(F)$ have coordinates $r_i:=(0,\dots,1_i,\dots,0)$ for $1\le i\le k$ and $r_0=(-1,-1,\dots,-1)$ where $1_i$ means $1$ lives in the $i$-th place. Consider a functional $l_i:\mathbb{Z}^k\to\mathbb{Z}$ with coordinate $(0,\dots,-1_i,\dots,0)\in(\mathbb{Z}^k)^\vee$. It is clear that $g_{l_i}(r_0,r_i)\in\mathrm{Gly}(\pi_*(F))$. Therefore, projection formula gives
    $$1=\max(0,l_i)\cdot(\pi_*(F))=\pi_*(\pi^*(\max(0,l_i))\cdot F)=\pi^*(\max(0,l_i))\cdot F.$$
    The last equality holds from $|\mathbb Z^k/\pi(\mathbb Z^n)|=1$ since $\pi_*(F)$ is a Bergman fan. Therefore,
    $$g_{\pi^*(l_i)}(\pi^{-1}(r_0),\pi^{-1}(r_i))\in\mathrm{Gly}(F).$$
    Finally, Lemma 3.5 implies that
    \begin{center}
        fibre of $\pi_*(F)$ $=$ $\bigcup_{i=0}^k\mathbb{R}_+(\pi^{-1}(r_i))$ $\subseteq$ $P_0$ 
    \end{center}
        where $P_0:=\{$ $g\in\mathrm{Gly}(F)$ $|$ $\mathbb{R}_+\cdot\pi^{-1}(r_0)\in g$ $\}.$
    
\end{proof}

\begin{lemma}
    Let's fix a partition $\mathbf{P}$. For any $i\in\mathbf{P}$, there exists a $\mathbb{Z}$-linear map $\pi_i:\mathbb{Z}^n\to\mathbb{Z}^{|P_i|}$ such that ${\pi_i}_*(F)$ is isomorphic to a $1$-D Bergman fan over $\mathbb{Z}^{|P_i|}$.
\end{lemma}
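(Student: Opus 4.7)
The plan is to construct $\pi_i$ explicitly from the data already supplied by Theorem 3.11 and Corollary 3.12, essentially reversing the construction used in Lemma 3.14. Write $\phi(P_i)=\{\mathbb{R}_+v_0,\mathbb{R}_+v_1,\ldots,\mathbb{R}_+v_{|P_i|}\}$ and fix $v_0$ as an anchor. Theorem 3.11 furnishes, for each $j\in\{1,\ldots,|P_i|\}$, a unique linear functional $l_{0j}\in(\mathbb{Z}^n)^\vee$ with $g_{l_{0j}}(v_0,v_j)\in P_i$; by Lemma 3.3 these satisfy $l_{0j}(v_0)=1$, $l_{0j}(v_j)=-1$, and $l_{0j}(v_m)=0$ for every other generator $v_m$ of $F$, in particular for every ray outside $\phi(P_i)$. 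Corollary 3.12 additionally guarantees that $l_{01},\ldots,l_{0|P_i|}$ are $\mathbb{Z}$-linearly independent.

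Next I would define the candidate map
$$\pi_i:\mathbb{Z}^n\to\mathbb{Z}^{|P_i|},\qquad \pi_i(x)=\bigl(-l_{01}(x),\,\ldots,\,-l_{0|P_i|}(x)\bigr).$$
Evaluation on the generators gives $\pi_i(v_0)=(-1,\ldots,-1)$, $\pi_i(v_j)=e_j$ for $1\le j\le |P_i|$, and $\pi_i(v_k)=0$ for every ray $\mathbb{R}_+v_k$ not lying in $\phi(P_i)$. Because the image already contains every standard basis vector, $\pi_i$ is surjective onto $\mathbb{Z}^{|P_i|}$.

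Assembling the pushforward, the images $\pi_i(v_0),\pi_i(v_1),\ldots,\pi_i(v_{|P_i|})$ are pairwise distinct primitive vectors of $\mathbb{Z}^{|P_i|}$, so no two rays of $\phi(P_i)$ collapse under $\pi_i$, while all rays outside $\phi(P_i)$ are annihilated. Each surviving ray inherits weight $\omega_{v_j}=1$ from the gallery condition, so $\pi_i(F)=\pi_{i*}F$ is the weighted fan with rays $\{(-1,\ldots,-1),e_1,\ldots,e_{|P_i|}\}$ each of weight $1$, which is precisely the standard 1-dimensional Bergman fan on $\mathbb{Z}^{|P_i|}$ described in Lemma 3.14. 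Balancing passes through automatically via $\pi_i\bigl(\sum_j\omega_jv_j\bigr)=\pi_i(0)=0$.

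The only real point of care is the bookkeeping for the pushforward: one must verify that no two distinct rays of $F$ are identified into a single ray of the image (otherwise the weights would exceed $1$) and that every ray outside $\phi(P_i)$ really does vanish. Both are forced by the vanishing/nonvanishing pattern of the $l_{0j}$ on the generators $v_m$ recorded in Lemma 3.3. No deeper obstacle arises; the construction is in effect the mirror image of Lemma 3.14's argument.
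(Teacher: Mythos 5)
Your construction is correct and is essentially identical to the paper's own proof: both assemble the projection matrix from the gallery functionals attached to a fixed anchor ray, so that the anchor maps to $(-1,\ldots,-1)$, the remaining rays of $\phi(P_i)$ map to the standard basis vectors, and all other rays are annihilated, with weights preserved. The only cosmetic difference is your sign convention (negating the $l_{0j}$ rather than orienting them with the anchor at $-1$ from the start).
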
 

\begin{proof}
    Assume $1\in\mathbf{P}$ such that $|P_1|=k$, i.e., we may assume $P_1:=\{g_{l_i}(v_i,v_1)\mid2\le i\le k+1\}$ after changing the indices. Now we define $\pi_1:\mathbb{Z}^n\to\mathbb{Z}^k$ by the $\mathbb{Z}_{k\times n}$ matrix
    $$[l_2,l_3,\dots,l_{k+1}]^T_{k\times n}.$$
    Recall that for any $2\le i\le k+1$, $l_i$ sends $v_{i}$ to $1$, $v_1$ to $-1$ and $v_j$ to $0$ for all $j>k+1$. So it follows that
    \begin{align*}
        \pi_1(v_{i}) &=(0,\dots,1_{i-1},\dots,0)\in\mathbb{Z}^k\ \ 2\le i\le k+1;\\
        \pi_1(v_j) &=0\in\mathbb{Z}^k\ \ j>k+1;\\
        \pi_1(v_1) &=(-1,-1,\dots,-1)\in\mathbb{Z}^k.
        \end{align*}
        Note that $\mathrm{span}_{\mathbb{Z}}(\pi_1(v_{i})\mid1\le i\le k)\cong \mathbb{Z}^k=\mathbb{Z}^{|P_1|}$, so $\pi_1$ is exactly a $\mathbb{Z}$-linear map. \
        
        Finally, $\pi_1(v_{i})$ and $\pi_1(v_1)$ are primitive generators, so $\pi_1$ preserves their weights. In other words, the weights of $\mathbb{R}_+\cdot\pi_1(v_1)$ and $\mathbb{R}_+ \cdot\pi_1(v_{i})$ (for $2\le i\le k+1$) are all $1$. Therefore, ${\pi_1}_*(F)$ is exactly a Bergman fan over $\mathbb{Z}^{|P_1|}$. This holds for all $i\in\mathbf{P}$, hence we are done.
\end{proof}

Now we have our desired structure theorem. The conjecture that led to this theorem was originally proposed by Alex Esterov. Recall that Lemma 2.18 gives a list of all non-negative TR functions which serve as regular sequences of a direct sum of 1-D Bergman fans over $\mathbb{R}^n$.

\begin{theor}[‘Minimal model programme' for 1-cycles]
   1. Let $F$ be a $1$-dimensional tropical fan over $\mathbb{R}^n$. $F$ is regular if and only if there is a $\mathbb{Z}$-linear map $\pi_F$ sending $F$ to a direct sum of finitely many $1$-dimensional Bergman fans.

   2. If $\pi:\mathbb{R}^n\to\mathbb{R}^r$ is a $\mathbb{Z}$-linear map such that $\pi_*(F)$ is also a direct sum of $1$-D Bergman fans, then $\pi$ is a factor of $\pi_F$.

   3. Every non-negative TR function $M:\mathbb{R}^n\to\mathbb{R}$ with $M\cdot F=1$ lifts from a non-negative TR function $M':\mathbb{R}^{d}\to\mathbb{R}$ such that $M'\cdot {\pi_F}_*(F)=1$ and $d=$dim$(\langle{\pi_F}_*(F)\rangle)$.
\end{theor}

\begin{proof}

1. The \textit{if} direction comes from the projection formula directly. Now let's prove the \textit{only if} part.

Consider the canonical partition of $F$ with $\mathbf{P}$. We may assume $\mathbf{P}=\{1,2,\dots,k\}\subseteq\{1,2,\dots,q\}$ after re-assignment. We denote them by
$$P_i:=\{g_{l^i_j}(v^i_j,v^i_0)\mid1\le j\le |P_i|   \}.$$
Now we define a projection $\pi_F:\mathbb{R}^n\to \mathbb{R}^{\sum_{i\in\mathbf{P}}|P_i|}$ given by the following matrix
$$[l^1_1,\dots,l^1_{|P_1|},l^2_1,\dots,l^2_{|P_2|},\dots,\dots,l^k_{1},\dots,l^k_{|P_k|}]^T\in M_{(\sum_{i\in\mathbf{P}}|P_i|)\times n}(\mathbb{Z}).$$
The proof of Lemma 3.15 derives that $\pi_F$ sends $P_i$ to a Bergman fan over $\mathbb{Z}^{|P_i|}$ embedded in $\mathbb{Z}^{\sum_{i\in\mathbf{P}}|P_i|}$. Therefore, $$span_{\mathbb{Z}}(\pi(v^i_j)\mid1\le i\le k;\ 1\le j\le |P_i|)\cong \mathbb{Z}^{\sum_{i\in\mathbf{P}}|P_i|}.$$
Hence $\pi_F:\mathbb{Z}^n\to\mathbb{Z}^{\sum_{i\in\mathbf{P}}|P_i|}$, so it is exactly a $\mathbb{Z}$-linear map. Finally, it is clear that ${\pi_F}_*(F)$ is a direct sum of Bergman fans implied by each $P_i$, so ${\pi_F}_*(F)$ is a direct sum of $k$ Bergman fans as required.\\

2. Assume $\pi:\mathbb{R}^n\to\mathbb{R}^r$ to be a $\mathbb{Z}$-linear map sending $F$ to a direct sum of Bergman fans: $\pi_*(F)=\oplus_{i\in\textbf{A}}B_i$ for an index set $\textbf{A}$. We deduce from Lemma 3.14 that $r\le \sum_{i\in\textbf{P}}|P_i|$. So it is enough to show that there exists a $\mathbb{Z}$-linear map $\psi:\mathbb{Z}^{\sum_{i\in\textbf{P}}|P_i|}\to\mathbb{Z}^r$ such that following diagram commutes:
\begin{center}
\begin{tikzcd}[column sep=large,row sep=large]
  \mathbb{Z}^{n}
    \arrow[r, "\pi_{F}"]
    \arrow[dr, "\pi"']
  & \displaystyle \mathbb{Z}^{\sum_{i\in \textbf{P}}\lvert P_{i}\rvert}
    \arrow[d, "\psi"] \\
  & \mathbb{Z}^{r}
\end{tikzcd}
\end{center}

Lemma 3.15 tells us that the fibre of each $B_i$ ($i\in\textbf{A}$) is exactly contained in one $P_j$. Let's consider the map $\mu:\textbf{A}\to \textbf{P}$ such that
\begin{center}
    $i\in\textbf{A}$ $\mapsto$ $j:P_j$ contains the fibre of $B_i$ (along $\pi$).
\end{center}
Without loss of generality, let's assume $1\in\textbf{P}$ and we consider $\mu^{-1}(1)\subset \textbf{A}$. Firstly, it is clear that the fibres $\pi^{-1}(B_i)\cap\pi^{-1}(B_j)=\{0\}$ for any $i,j\in\mu^{-1}(1)$; secondly, the direct sum of Bergman fans $\oplus_{i\in\mu^{-1}(1)}B_i$ can be embedded into $\oplus_{i\in\mu^{-1}(1)}\mathbb{Z}^{d_i}$ where $d_i:=$dim$(B_i)$. Note that 
\begin{center}
    $|\{$rays of $P_1\}|$ $=$ $|P_1|+1$ $=$ $|\{$rays of $\pi_F(P_1)\}|$
\end{center}
by the construction of $\pi_F$ above. Hence we define $\Pi_i:=\pi_F(\pi^{-1}(B_i))\subset\pi_F(P_1)$ for all $i\in\mu^{-1}(1)$. The discussion above implies
\begin{center}
    $|\{$rays of $\Pi_i\}|$ $=$ $|\{$rays of $\pi^{-1}(B_i)\}|$ $=$ $|\{$rays of $B_i\}|$.
\end{center}
We claim that there exists a $\mathbb{Z}$-linear map $\psi_i(\mu^{-1}(1)):\mathbb{Z}^{|P_1|}\to\mathbb{Z}^{d_i}$ such that $\psi_i(\mu^{-1}(1))(\Pi_i)=B_i$. Indeed, let's assume $\Pi_i:=\bigcup_{j=1}^{d_i+1}\mathbb{R}_+(r_j)\subset\pi_F(P_1)$ and we define $g_{l_j}(r_j,r_{d_i+1})$ to be the associated gallery. Hence $\psi_i(\mu^{-1})$ is defined by the following $\mathbb{Z}$ matrix
$$M(\psi_i(\mu^{-1}(1))):=[l_1,l_2,\dots,l_{d_i}]^T.$$
It is obvious to see that $\psi_i(\mu^{-1}(1))(\Pi_i)$ is a Bergman fan over $\mathbb{Z}^{d_i}$ which is isomorphic to $B_i$. Therefore, for $\{i_1,\dots,i_{|\mu^{-1}(1)|}\}=\mu^{-1}(1)$, we define
$$\psi(\mu^{-1}(1)):\psi_{i_1}(\mu^{-1}(1))\times\psi_{i_2}(\mu^{-1}(1))\times\dots\times\psi_{i_{|\mu^{-1}(1)|}}(\mu^{-1}(1)) $$which is a $\mathbb{Z}$-linear map $\mathbb{Z}^{|P_1|}\to\mathbb{Z}^{\sum_{i\in\mu^{-1}(1)}d_i}$. The discussion above gives
$$\psi(\mu^{-1}(1))(P_1)=B_{i_1}\oplus B_{i_2}\oplus\dots\oplus B_{i_{|\mu^{-1}(1)|}}.$$
 Finally, by defining $\psi(\mu^{-1}(j))$ for all $j\in \textbf{P}$, we set
$$\psi:\psi(\mu^{-1}(1))\times\dots\times\psi(\mu^{-1}(|\textbf{P}|))$$
and this $\psi$ is the desired $\mathbb{Z}$-linear map $\mathbb{Z}^{\sum_{i\in\textbf{P}}|P_i|}\to\mathbb{Z}^r$ which has $\psi\circ\pi_F=\pi$ as required.\\

3. Let $M$ be a TR function such that $M\cdot F=1$. Then Theorem 3.11 allows us to assume $M:=\max(0,h_1,\ldots,h_m)$ such that $g_{h_i}(v_{m+1},v_i)\in\mathrm{Gly}(F)$ after re-arranging the indices. Note that $$M=\max(0,(1,0,\ldots,0),(0,1,\ldots,0),\ldots,(0,\ldots,1))\circ[h_1,\ldots,h_m]^T.$$
On the one hand, let \textbf{P} be a canonical partition containing an index $m+1\in\textbf{P}$ where $v_{m+1}$ is the generator of $F$ satisfying $M\le\mathscr{M}_{max}(v_{m+1})$. Then the matrix $[h_1,\ldots,h_m]^T$ is a sub-matrix of the matrix defined by $\pi_F$ as we constructed above.

On the other hand, the TR function $\max(0,(1,0,\ldots,0),\ldots,(0,\ldots,1))$ can be naturally extended to a TR function over $\mathbb{R}^{\sum_{i\in\textbf{P}}|P_i|}$ which is a regular sequence of ${\pi_F}_*(F)$.

As a consequence, we conclude that $M$ is lifted from a non-negative TR function extended from $\max(0,(1,0,\ldots,0),\ldots,(0,\ldots,1))$ along $\pi_F$ as required.
\end{proof}

We close this section with the following curious consequence, which
sheds further light on the combinatorial assembly of \(F\).
\begin{prop}
    Let $m\in\mathbf{P}$. If $F$ is irreducible and $F\neq P_j$ for all $j\in\mathbf{P}$, then dim$(\langle P_m\rangle)$ $=|P_m|+1$.  
\end{prop}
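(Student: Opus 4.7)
My plan is to prove the contrapositive: assuming $\dim\langle\phi(P_m)\rangle\le|P_m|$, I will construct a nontrivial direct-sum decomposition of $F$, contradicting irreducibility (using that $F\neq\phi(P_m)$ by hypothesis). Write $L:=\langle\phi(P_m)\rangle$ and keep the notation of the proof of Theorem 3.16, so $\phi(P_m)$ consists of the central ray $v^m_0$ together with satellites $v^m_1,\dots,v^m_{|P_m|}$, and the projection $\pi_m:\mathbb Z^n\to\mathbb Z^{|P_m|}$ from Lemma 3.15 sends each $v^m_j$ to the $j$-th standard basis vector and $v^m_0$ to $(-1,\dots,-1)$.

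Since $\pi_m(\phi(P_m))$ spans $\mathbb R^{|P_m|}$, we have $\pi_m(L)=\mathbb R^{|P_m|}$, so the inequality $\dim L\le|P_m|$ forces $\dim L=|P_m|$ and $\pi_m|_L$ to be a vector-space bijection. Because the $v^m_j$ are primitive and their images form the standard $\mathbb Z$-basis, $\pi_m$ in fact restricts to a lattice isomorphism $L\cap\mathbb Z^n\xrightarrow{\sim}\mathbb Z^{|P_m|}$, under which $\phi(P_m)$ is identified with the standard Bergman fan in $L$. In particular its unit-weighted rays (Lemma 3.3) satisfy $\sum_{v\in\phi(P_m)}v=0$, so $\phi(P_m)$ is already balanced on its own.

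Now let $Q:=F^{(1)}\setminus\phi(P_m)$. For each $v\in Q$ and each defining functional $l^m_j$ of $\pi_m$, the gallery condition in Lemma 3.3 gives $l^m_j(v)=0$, so $\langle Q\rangle\subseteq(\ker\pi_m)_{\mathbb R}$. Since $L\cap(\ker\pi_m)_{\mathbb R}=0$ by injectivity of $\pi_m|_L$ and $L+\langle Q\rangle=\langle F\rangle=\mathbb R^n$, dimension counting yields $\langle Q\rangle=(\ker\pi_m)_{\mathbb R}$, and hence $\langle Q\rangle\cap\mathbb Z^n=\ker\pi_m$. Subtracting $\sum_{v\in\phi(P_m)}v=0$ from the balancing relation of $F$ shows $\sum_{v\in Q}\omega_v v=0$, so $Q$ with its inherited weights is a one-dimensional tropical cycle in its own right.

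To conclude, the split short exact sequence $0\to\ker\pi_m\to\mathbb Z^n\to\mathbb Z^{|P_m|}\to 0$, with splitting $e_j\mapsto v^m_j$, gives $\mathbb Z^n=(L\cap\mathbb Z^n)\oplus\ker\pi_m$, i.e., exactly the lattice decomposition $\mathbb Z^n=N_{\phi(P_m)}\oplus N_Q$ required for a cycle direct sum. Since $F\neq\phi(P_m)$ by hypothesis, $Q$ is nonempty, and so $[F]=[\phi(P_m)]\oplus[Q]$ is a nontrivial direct-sum decomposition, contradicting the irreducibility of $F$. The main obstacle in this plan is precisely the passage from a vector-space splitting $\mathbb R^n=L\oplus\langle Q\rangle$ to a lattice splitting of $\mathbb Z^n$; what saves the day is that the primitive satellites $v^m_j$ furnish a canonical $\mathbb Z$-linear section of the surjection $\pi_m$, so the lattice splitting is automatic rather than requiring additional coprimality checks.
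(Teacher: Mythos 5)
Your proof is correct, and at its core it runs on the same engine as the paper's: from the dimension drop you extract the relation $v_m^{}+\sum_i v_i=0$ among the rays of $\phi(P_m)$ (all of unit weight by Lemma 3.3), recognize it as the balancing condition of the subfan $\phi(P_m)$, and split $F$ into $\phi(P_m)$ plus the rest, contradicting irreducibility since $F\neq\phi(P_m)$ guarantees the remainder is nonempty. The packaging differs: the paper argues contrapositively in bare linear algebra, applying each gallery functional $l_t$ to a hypothetical dependency $\sum_i\lambda_i'v_i+\lambda_m'v_m=0$ to force all coefficients equal, and then is content with the plain cycle decomposition $F=F_K+(F\setminus F_K)$; you instead route everything through the projection $\pi_m$ of Lemma 3.15 (whose matrix is exactly $[l_1,\dots,l_k]^T$, so the injectivity of $\pi_m|_L$ encodes the same computation) and go further, verifying that $\langle Q\rangle=(\ker\pi_m)_{\mathbb R}$, that $Q$ is itself balanced, and that the primitive satellites furnish a $\mathbb Z$-linear section giving $\mathbb Z^n=(L\cap\mathbb Z^n)\oplus\ker\pi_m$. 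This buys you a genuine direct-sum decomposition in the sense of the definition preceding Lemma 2.18, which is strictly more than the paper establishes or needs for the contradiction, but it is a correct strengthening and makes the structural content of the failure case more transparent.
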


\begin{proof}
    Set $P_m=\{g_{l_i}(v_i,v_m)\mid1\le i\le k\}$. If $\{v_1,v_2,\dots,v_k\}$ is linearly dependent, say
    $$\sum_{i=1}^k\lambda_iv_i=0$$
    with an index $\alpha:\lambda_\alpha\neq 0$, then $l_\alpha(\sum_{i=1}^k\lambda_iv_i)=0$ implies that there exists a generator, say $v_\beta$ such that $1\le \beta\le k$, $l_\alpha(v_\beta)\neq 0$. This contradicts the definition of $g_{l_\alpha}(v_\alpha,v_m)$. Therefore $\{v_1,v_2,\dots,v_k\}$ is linearly independent. So dim$(\langle P_m\rangle)\ge |P_m|=k$. If $\{v_m,v_1,v_2,\dots,v_k\}$ is linearly independent, then dim$(\langle P_i\rangle)=|P_i|+1$ as required. If the set is linearly dependent, say
    $$\sum_{i=1}^k\lambda_i'v_i+\lambda_m'v_m=0,$$
    then for any $1\le t\le k$,
    $$l_t(\sum_{i=1}^k\lambda_i'v_i+\lambda_m'v_m)=l_t(\lambda_t'v_t)+l_t(\lambda_m'v_m)=\lambda_t'-\lambda'_m.$$
    So
    $$\lambda'_i=\lambda'_j=\lambda'_m\ \ \ \forall 1\le i\neq j\le k.$$
    This implies that $v_m+\sum_{i=1}^kv_i=0$. However, this is exactly the balancing condition on the sub-fan $F_K$ defined by the rays generated by $\{v_m,v_1,v_2,\dots,v_k\}$: $$\sum_{i=1}^k\omega_iv_i+\omega_mv_m=0=\sum_{i=1}^kv_i+v_m.$$
    (Note that $\omega_i=1$ comes from the definition of gallery).

    Hence $F_k$ is a $1$-dimensional tropical sub-fan of $F$. Therefore $F=F_k+(F\setminus F_K)$ which contradicts the irreducibility since $F\setminus F_k$ is non-empty by assumption.\\
\end{proof}

\section{Finiteness theorem for strongly regular $2$-cycles}

One of our interests is to learn the 2-dimensional minimal model programme for 2-cycles as we mentioned in the Introduction. This section plays an essential role as a toolkit for this study. In dimension 2, the Hodge--Lefschetz package reduces to the Hodge index theorem for 2-dimensional tropical fans, as in \cite{BabaeeHuh19}. As an analogue of the Hodge index hypothesis, we restrict our $2$-cycles $[F]$ to satisfy the following: Let $M_1,M_2$ be two TR functions over $\mathbb{R}^n$. Then
$$M_1\cdot M_1\cdot [F]\le 1;\ M_1\cdot M_2\cdot [F]\le 1;\ M_2\cdot M_2\cdot [F]\le 1.$$
Although this condition is neither weaker nor stronger than the Hodge index theorem—$[F]$ is even not necessarily regular—it covers all the cases we are interested in: we say $[F]$ satisfies the Hodge index theorem if $(M_1\cdot M_2\cdot [F])^2\ge (M_1\cdot M_1\cdot [F])(M_2\cdot M_2\cdot [F])$. We classify the above inequalities into the following three cases:

1. If $[F]$ satisfies the Hodge index theorem, and $(M_2\cdot M_2\cdot [F])(M_1\cdot M_1\cdot [F])>0$, then $[F]$ is regular over the regular sequence $M_1,M_2$ which means $[F]$ satisfies our inequalities assumption above.

2. If $[F]$ satisfies the Hodge index theorem which has $M_1\cdot M_2\cdot [F]=1$, but $M_1\cdot M_1\cdot [F]=0$, then we may check whether $[F]$ defines a regular fibration when $M_2\cdot M_2\cdot [F]\le 1$ (Definition 2.15). Furthermore, can $[F]$ define a regular fibration for arbitrary $M_2\cdot M_2\cdot [F]$? As the first motivation for putting forward this view, we give the following example:

\begin{exa}
    Let $F$ be a 2-dimensional tropical fan over $\mathbb{R}^3$ given by the following graph:
\begin{center}
\begin{tikzpicture}[scale=2,
        vertex/.style={circle,fill=black,inner sep=2pt},
        every label/.style={font=\small}]

\node[vertex,label=above right:{$(1,0,0)$}] (a) at ( 1, 0) {};
\node[vertex,label=above left:{$(0,1,0)$}]  (b) at ( -1,1) {};
\node[vertex,label=below:{$(-1,-1,0)$}]       (c) at (-1,-1) {};
\node[vertex,label=above:{$(0,0,1)$}]       (d) at (0,1) {};
\node[vertex,label=below:{$(0,0,-1)$}]       (e) at (0,-1) {};

\draw (a) -- (d);
\draw (a) -- (e);
\draw (b) -- (d);
\draw (b) -- (e);
\draw (c) -- (d);
\draw (c) -- (e);

\end{tikzpicture}
\end{center} 
where the vertices are the rays of $F$ and the edges are the cones generated by the rays given by the endpoints. Let $M_1,M_2$ be two non-negative TR functions over $\mathbb{R}^3$ such that $M_1=\max(0,(1,0,0)^\vee)$ and $M_2=\max(0,(0,0,1)^\vee)$. Then one can calculate that 
$$M_1\cdot [F]=[(\{\mathbb{R}_+\cdot(0,0,1),\mathbb{R}_+\cdot(0,0,-1),\textbf{0}\},\omega=1)],$$
$$M_2\cdot[F]=[(\{\mathbb{R}_+\cdot(1,0,0),\mathbb{R}_+\cdot(0,1,0),\mathbb{R}_+\cdot(-1,-1,0),\textbf{0}\}),\omega=1].$$
Therefore, we have $M_1\cdot M_2\cdot [F]=1=M_2\cdot M_2\cdot [F]$ but $M_1\cdot M_1\cdot [F]=0$. Define $\pi:\mathbb{R}^3\to\mathbb{R}^3/(0,0,1)$. Then $\pi(|F|)=|M_2\cdot[F]|$, and the central fibre $\pi|_F=|M_1\cdot[F]|$. As a consequence, both of them have structures of regular cycles. So $\pi$ is a regular fibration of $F$ as required.
\end{exa}

3. If $M_1\cdot M_2\cdot [F]=0$ but the other two are $1$, then $[F]$ does not satisfy the Hodge index theorem. So we get some interesting examples which contradict with the Hodge index theorem as by-products.

\begin{defin}[Strongly regular 2-cycle]
A $2$-cycle $[F]$ over $\mathbb{R}^n$ is called strongly regular over $M_1,M_2$ if there exist non-negative TR functions $M_1,M_2$ such that
$$M_1\cdot M_2\cdot [F]=1;\ M_1\cdot M_1\cdot [F]\le 1;\ M_2\cdot M_2\cdot[F]\le1.$$
\end{defin}

\begin{convention}
    In this section, we will always assume $T_1,T_2$ to be two TR functions over $\mathbb{R}^n$ such that
$$T_1:=\max(v_0,v_1,v_2,\dots,v_k),$$
$$T_2:=\max(w_0,w_1,w_2,\dots,w_{n-k}),$$
and $v_1,v_2,\dotsc,v_k,w_1,\dotsc,w_{n-k}$ are linearly independent over $(\mathbb{Z}^n)^\vee$ and $v_0=w_0=0$ for convenience.\\
\end{convention}

\subsection{Settings and examples}

The following theorem provides the section’s first finiteness result and plays a pivotal role in the proof of our main theorem.

\begin{theor}
    Given two non-negative TR functions $T_1,T_2$ that satisfy Convention 4.3, there are finitely many $2$-cycles $[L]$ whose support $|L|$ is a 2-plane such that $T_1\cdot T_2\cdot [L]=1$ and $T_1\cdot T_1\cdot [L]\le 1$, $T_2\cdot T_2\cdot [L]\le 1$.
\end{theor}

\begin{proof} From $T_1\cdot T_2\cdot[L]=1$ we conclude that the weight function of any representative of $[L]$ can only be the constant $1$. Hence it remains to show that we have only finitely many possible $|L|$.

    Now we have two cases: $T_1\cdot [L]$, $T_2\cdot [L]$ are two lines ($T_1\cdot T_1\cdot [L]=0=T_2\cdot T_2\cdot [L]$); or one of them is not a line (say $T_1\cdot T_1\cdot [L]=1$).\\
    
    \textit{case 1:} For each $i=1,2$. Assume $T_i\cdot [L]=\langle\rho_i\rangle$ to be a line with primitive generator $\rho_i$ (clearly $\rho_1\neq \rho_2$). Then $[V(T_i)\cap_{st}L]=\langle\rho_i\rangle$. Thus $\langle\rho_i\rangle$ lies entirely in the lineality spaces of $V(T_i)$, called $W_i$. We denote them by $W_1:=\bigcap_{i=1}^k(v_i)^\perp$ and $W_2:=\bigcap_{j=1}^{n-k}(w_j)^\perp$. Moreover, $T_2\cdot\langle\rho_1\rangle=1=T_1\cdot\langle\rho_2\rangle$ implies that there are two index sets $A\subset\{1,\dots,k\}$, $B\subset\{1,\dots,n-k\}$ such that 
    $$v_i(\rho_2)=1\ \forall\ i\in A\ \ v_j(\rho_2)=0\ \forall\ j\notin A;$$
    $$w_i(\rho_1)=1\ \forall\ i\in B\ \ w_j(\rho_1)=0\ \forall\ j\notin B.$$
    This is because $1=T_2\cdot\langle\rho_1\rangle=\omega(\rho_1)T_2(\rho_1)+\omega(-\rho_1)T_2(-\rho_1)$. Therefore $\omega(\rho_1)=1=\omega(-\rho_1)$,  so we may assume $T_2(\rho_1)=1$ and thus $T_2(-\rho_1)=0$. $T_2(\rho_1)=1$ implies that there exists an index set $B\subset\{1,\dots,n-k\}$ with $w_i(\rho_1)=1$ for all $i\in B$, and $w_j(\rho_1)<w_i(\rho_1)$ for all $j\notin B$. Let $j\notin B$. If $w_j(\rho_1)<0$. Then $w_j(-\rho_1)>0$ which contradicts $T_2(-\rho_1)=0$. Hence $w_j(\rho_1)=0$ for all $j\notin B$. The analogous statement holds for $T_1$.
    Therefore, let $M$ be the integer matrix $[v_1,\dots,v_k,w_1,\dots,w_{n-k}]^T$, and thus $\rho_2$ satisfies:
    $$ M(\rho_2)=(\textbf{x}_A,0,\dots,0)^T\in\mathbb{Z}^n\ \ A\in 2^{\{1,2,\dots,k\}}\setminus\emptyset$$
    where $\textbf{x}_A$ is a vector in $\mathbb{Z}^k$ such that
    $$\begin{cases}
        e_i(\textbf{x}_A)=1,&i\in A\\
        e_j(\textbf{x}_A)=0,&j\notin A
    \end{cases}$$
    where $e_i,e_j$ are the standard dual basis elements of $(\mathbb{Z}^k)^\vee$. Hence there are finitely many such $\rho_2$ since $M$ is invertible. We can solve for all possible $\rho_1$ by the same way. Therefore, there are finitely many such 2-planes $|L|:=\langle\rho_1,\rho_2\rangle$.\\

    \textit{case 2:} If $T_1\cdot [L]$ is not a line, then it is a tropical curve on the 2-plane $|L|$. The hypothesis tells us $T_1\cdot T_1\cdot [L]\le 1$ and $T_1\cdot T_1\cdot [L]=[(T_1\cdot [L])\cap_{st}(T_1\cdot [L])]>0$. So $T_1\cdot T_1\cdot [L]=1=T_2\cdot T_1\cdot [L]$. From the full classification of 1-D tropical fans in Section 3, $T_1\cdot [L]$ is a $1$-D Bergman fan over $|L|\cong\mathbb{R}^2$. Therefore, let's assume $C:=(\bigcup_{i=1}^3r_i,\omega_i=1)$ to be the corresponding 1-D tropical curve $T_1\cdot [L]$.
    
     $T_1\cdot C=1=T_2\cdot C$ gives
    $$\sum_{i=1}^3T_1(r_i)=1=\sum_{i=1}^3T_2(r_i).$$
    
    Without loss of generality, let's assume $T_1(r_1)=1$ and $T_1(r_2)=T_1(r_3)=0$. We denote $\Delta_k\subseteq\{1,2,\dots,k\}$ such that $i\in\Delta_k$ if and only if $v_i(r_1)=1$.
    
    The following two bullets record all possibilities of $v_i(r_j)$ for $i\in\{1,\dots,k\}$ and $j=1,2,3$. We will see that there are only finitely many choices for $v_i(r_j)$.\\

    $\bullet$ for any $i\notin\Delta_k$: $v_i(r_1)=v_i(r_2)=v_i(r_3)=0$.

    Clearly $v_i(r_1)\le 0$ for all $i\notin\Delta_k$. If $v_i(r_1)<0$, then we apply $v_i$ to the balancing condition $\sum_{t=1}^3r_i=0$ and obtain an index $j\in\{2,3\}$ with $v_i(r_j)>0$. However, this contradicts $T_1(r_2)=0=T_1(r_3)$, so $v_i(r_1)=0$. Furthermore, it is clear that $v_i(r_2)\le 0$ and $v_i(r_3)\le 0$. Then balancing condition implies
    $$v_i(\sum_{j=1}^3r_i)=0=v_i(r_2)+v_i(r_3). $$
    Therefore we can only have $v_i(r_2)=0=v_i(r_3)$ for all $i\notin\Delta_k$.\\

    $\bullet$ $\Delta_k=\Delta(2)\dot \cup\Delta(3)$ where $i\in\Delta(2)$ if and only if $v_i(r_2)=-1$ and $v_i(r_3)=0$. Similarly, $j\in\Delta(3)$ if and only if $v_j(r_3)=-1$, $v_j(r_2)=0$.

    This can be obtained directly from the balancing condition $\sum_{j=1}^3r_j=0$.\\
    
    Indeed, the two bullets above reveal all possibilities of $v_i(r_j)$ for all $i\in\{0,1,\dots,k\}$ and $j\in\{1,2,3\}$. By defining an analogous set $\Delta_{n-k}\subset\{1,2,\dots,n-k\}$ we can play the same game among all linear functional $w_i$ of $T_2$, and we can obtain all possibilities of $w_i(r_j)$ for all $i\in\{0,1,\dots,n-k\}$ and $j\in\{1,2,3\}$ as well. Finally, $r_1,r_2,r_3$ can be solved by the image set of invertible matrix $[v_1,\dots,v_k,w_1,\dots,w_{n-k}]^T$ as usual. Therefore, the 2-plane $|L|=\langle r_1,r_2,r_3\rangle$ has only finitely many possibilities in this case.
\end{proof}

The following two counter-examples explain why we cannot drop any one of the restrictions in Theorem 4.4:

\begin{exa}[Counter example 1] We cannot drop the condition $T_1
\cdot T_1\cdot [L]\le 1$ or $T_2\cdot T_2\cdot [L]\le 1$; here is a counter example. Let $T_1:=\max(0,v_1,v_2)$ and $T_2:=\max(0,v_3,v_4)$ where $\{v_1,\dots,v_4\}$ forms the standard basis of $\mathbb{Z}^4$. Let $\rho=(1,1,0,0)\in\mathbb{Z}^4$, $\tau_{ab}=(a,b,0,-1)\in\mathbb{Z}^4$ for arbitrary $a,b\in\mathbb{Z}$. We set the 2-plane $L_{ab}:=(\langle\rho,\tau_{ab}\rangle,\omega=1)$.

Indeed, $T_2\cdot [L_{ab}]=[(\langle\rho\rangle,\omega=1)]$. Note that $[V(T_2)\cap_{st}L_{ab}]=[\langle\rho\rangle]$. So $T_2\cdot [L_{ab}]$ is a line whose support is $\langle\rho\rangle$ with a positive weight. It remains to show that $\omega(T_2\cdot [L_{ab}])=1$. Note that $(1,0,0,0),\rho,(0,0,1,0),\tau_{ab}$ form a $\mathbb{Z}$-basis, so $[\mathbb{Z}^4\cap L_{ab}:\mathbb{Z}^4_{\rho}+\mathbb{Z}^4_{\tau_{ab}}]=1$ and this means that $\pm\tau_{ab}$ are exactly the primitive generators pointing from $\langle\rho\rangle$. Hence
$$\omega(T_2\cdot [L_{ab}])=\omega(\rho)= T_2(\tau_{ab})-T_2(\rho)+T_2(-\tau_{ab})-T_2(\rho)=0-0+v_4(-\tau_{ab})-0=1.$$
So we have $T_2\cdot [L_{ab}]=[(\langle\rho\rangle,\omega=1)]$. It is clear that $T_1\cdot[(\langle\rho\rangle,\omega=1)]=1$ but $T_2\cdot[(\langle\rho\rangle,\omega=1)]=0$.
    This implies that any 2-plane of the form $L_{ab}$ satisfying $T_1\cdot T_2\cdot [L_{ab}]=1$ and $T_2\cdot T_2\cdot [L_{ab}]=0$. Hence the finiteness fails if we drop the restriction of $T_1\cdot T_1\cdot [L]$.
\end{exa}

\begin{exa}[Counter example 2] We cannot drop the condition $T_1\cdot T_2\cdot [L]=1$ either. In other words, there are infinitely many 2-planes $L$ with $T_1\cdot T_1\cdot [L]=0=T_2\cdot T_2\cdot [L]$. We also assume $T_1:=\max(0,v_1,v_2)$ and $T_2:=\max(0,v_3,v_4)$ where $\{v_1,\dots,v_4\}$ forms the standard basis of $\mathbb{Z}^4$. Define the 2-plane $L_{abcd}:=\langle(a,b,0,0),(0,0,c,d)\rangle$ with $\mathrm{gcd}(a,b)=1=\mathrm{gcd}(c,d)$. Clearly, $T_1\cdot[L_{abcd}]=[\langle(0,0,c,d)\rangle]$ and $T_2\cdot[L_{abcd}]=[\langle(a,b,0,0)\rangle]$. Thus $T_1\cdot T_1\cdot [L_{abcd}]=0=T_2\cdot T_2\cdot [L_{abcd}]$. Therefore we have infinitely many such $L_{abcd}$ so the finiteness fails.\\
    
\end{exa}

The next lemma will be used in the main proof of this section which is an analogue of Theorem 4.4. It reveals more finiteness results based on the idea of the proof in Theorem 4.4.

\begin{lemma}
    Let $T_1,T_2$ be two non-negative TR functions in Convention 4.3. Then
    
    a. there are finitely many $2$-cycles $[L]$ whose support is a 2-plane satisfying $T_1\cdot T_2\cdot [L]=0=T_1\cdot T_1\cdot [L]$ while $T_2\cdot T_2\cdot [L]=1$, 

    b. there does not exist any $2$-cycle $[L]$ whose support is a 2-plane satisfying $T_1\cdot T_2\cdot[L]=0=T_1\cdot T_1\cdot [L]=T_2\cdot T_2\cdot [L]$, and

    c. there does not exist any $2$-cycle $[L]$ whose support is a 2-plane satisfying $T_1\cdot T_2\cdot[L]=0$ but $T_1\cdot T_1\cdot [L]=1=T_2\cdot T_2\cdot [L]$.

\end{lemma}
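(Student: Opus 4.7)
The argument for all three parts rests on two facts. First, the lineality spaces $W_{1}:=\bigcap_{i=1}^{k}v_{i}^{\perp}$ and $W_{2}:=\bigcap_{j=1}^{n-k}w_{j}^{\perp}$ satisfy $W_{1}\cap W_{2}=\{0\}$ because the $v_{i},w_{j}$ together form a basis of $(\mathbb{R}^{n})^{\vee}$ by Convention 4.3. Second, restricting $T_{1},T_{2}$ to any 2-plane $L$ carrying weight one gives $T_{1}\cdot T_{2}\cdot[L]=\mathrm{MV}(\mathrm{Newt}(T_{1}|_{L}),\mathrm{Newt}(T_{2}|_{L}))$ in $L^{\vee}$, and $T_{i}\cdot T_{i}\cdot[L]=1$ iff $\mathrm{Newt}(T_{i}|_{L})$ is a unimodular lattice triangle in $L^{\vee}$. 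I will also reuse the reduction implicit in the proof of Theorem 4.4 case 2: if $T_{i}\cdot[L]$ is a proper 1-cycle (neither a single line nor zero), then some of its rays lie outside $W_{i}$, forcing $T_{i}\cdot T_{i}\cdot[L]>0$; equivalently, $T_{i}\cdot T_{i}\cdot[L]=0$ forces $T_{i}\cdot[L]$ to be a line or the zero cycle.

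\textbf{Part (b).} Assume such $[L]$ exists. Applying the reduction above to $T_{1}$, either $T_{1}\cdot[L]=0$ or $T_{1}\cdot[L]=\langle\rho\rangle$ for some $\rho$. In the line case, $T_{1}\cdot T_{1}\cdot[L]=T_{1}(\rho)+T_{1}(-\rho)=0$ puts $\rho\in W_{1}$, while $T_{1}\cdot T_{2}\cdot[L]=T_{2}(\rho)+T_{2}(-\rho)=0$ puts $\rho\in W_{2}$, so $\rho\in W_{1}\cap W_{2}=\{0\}$, a contradiction. Hence $T_{1}\cdot[L]=0$, which means $T_{1}$ is linear on $|L|$, so $|L|\subset W_{1}$. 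The symmetric argument for $T_{2}$ gives $|L|\subset W_{2}$, so $|L|\subset W_{1}\cap W_{2}=\{0\}$, contradicting $\dim|L|=2$.

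\textbf{Part (c).} Both $T_{i}\cdot T_{i}\cdot[L]=1$ force $\mathrm{Newt}(T_{i}|_{L})$ to be two-dimensional (in fact unimodular triangles) in $L^{\vee}\cong\mathbb{R}^{2}$. The classical vanishing criterion for planar mixed volume states $\mathrm{MV}(P,Q)=0$ only when one of $P,Q$ is a point or both are parallel segments; neither occurs here. Hence $T_{1}\cdot T_{2}\cdot[L]=\mathrm{MV}(\mathrm{Newt}(T_{1}|_{L}),\mathrm{Newt}(T_{2}|_{L}))>0$, contradicting the hypothesis.

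\textbf{Part (a).} The argument of (b) again yields $|L|\subset W_{1}$: the alternative $T_{1}\cdot[L]=\langle\rho\rangle$ would again place $\rho\in W_{1}\cap W_{2}=\{0\}$. Identify $W_{1}$ with $\mathbb{R}^{n-k}$ so that $T_{2}|_{W_{1}}=\max(0,e_{1},\ldots,e_{n-k})$. The condition $T_{2}\cdot T_{2}\cdot[L]=1$ forces $\mathrm{Newt}(T_{2}|_{L})=\mathrm{conv}(0,e_{1}|_{L},\ldots,e_{n-k}|_{L})\subset L^{\vee}$ to be a unimodular triangle. By Pick's theorem this triangle contains only its three lattice vertices $0,a,b$, so each $e_{i}|_{L}\in\{0,a,b\}$. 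Such an assignment amounts to a three-part partition of $\{1,\ldots,n-k\}$, and each admissible partition determines $L$ uniquely as the common zero set of the prescribed equations $e_{i}=0$ and $e_{i}=e_{j}$, producing at most $3^{n-k}$ planes $L$. The main obstacle I anticipate is checking that each partition indeed produces an $L$ realizing all three numerical conditions; once the Newton-polygon framework is in place, this should be a short verification.
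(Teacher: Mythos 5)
Your proof is correct. Parts (a) and (b) track the paper's argument closely: part (b) is essentially the paper's proof (both reduce $T_i\cdot T_i\cdot [L]=0$, via non-negativity and balancing, to $T_i\cdot [L]$ being a line inside $W_i$ or the zero cycle, and then collide the two cases with $W_1\cap W_2=\{0\}$), and in part (a) you reach the same finiteness by dual bookkeeping --- the paper solves for the three rays $r_1,r_2,r_3$ of the Bergman fan $T_2\cdot[L]$ from the finitely many admissible value vectors under the invertible matrix $[v_1,\dots,v_k,w_1,\dots,w_{n-k}]^T$, whereas you cut out $L$ by the dual linear conditions $v_i|_L=0$ and $w_j|_L\in\{0,a,b\}$ recorded by a three-part partition; it is the same count seen from the other side of the duality. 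Two small remarks on (a): the identification $T_2|_{W_1}=\max(0,e_1,\dots,e_{n-k})$ tacitly assumes the $w_j|_{W_1}$ form a lattice basis of the dual lattice of $W_1\cap\mathbb{Z}^n$, which Convention 4.3 does not guarantee --- but you do not need it, since each $w_j|_L$ is in any case a lattice point of the unimodular triangle $\mathrm{Newt}(T_2|_L)$ and hence one of its three vertices; and your closing worry about realizability is moot, because finiteness only requires the injection from admissible planes to partitions, not its surjectivity. Part (c) is where you genuinely diverge: the paper invokes the Hodge index theorem for $2$-planes from \cite{Esterov25}, while you argue directly that $T_i\cdot T_i\cdot[L]=1$ forces each $\mathrm{Newt}(T_i|_L)$ to be two-dimensional and that the planar mixed volume of two two-dimensional polygons is strictly positive. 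This is more elementary and self-contained --- it uses only the easy positivity half of Minkowski's criterion rather than the Alexandrov--Fenchel inequality --- at the harmless cost of being special to the case where the support is a plane, which is all that is needed here.
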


\begin{proof}
   a. According to the proof of case 2 in Theorem 4.4, the 1-cycle $T_2\cdot [L]$ is a 1-D Bergman fan $C:=(\bigcup_{i=1}^3r_i,\omega_i=1)$. To convince the reader, we introduce another way to see it: $T_2\cdot T_2\cdot [L]=1$ gives $$(T_2\cdot[L])\cap_{st}^L(T_2\cdot[L])=1$$where $\cap_{st}^L$ means the stable intersection only defined in $|L|$. Therefore, a 1-dimensional tropical fan whose self-intersection number is $1$ if and only if it is a 1-D Bergman fan equipped with constant weight $1$.
   
   On the one hand, $T_1\cdot T_2\cdot [L]=0=T_1\cdot [C]$ gives $v_i(r_j)=0$ for all $1\le i\le k$ and $j\in\{1,2,3\}$. On the other hand, $T_2\cdot T_2\cdot [L]=1=T_2\cdot [C]$ allows us to define the set $\Delta_{n-k}\subset\{1,2,\ldots,n-k\}$ such that $i\in\Delta_{n-k}$ if and only if $w_i(r_1)=1$ after assuming $T_2(r_1)=1$, $T_2(r_2)=0=T_2(r_3)$. The process in the proof of case 2 in Theorem 4.4 reveals finitely many possible $w_i(r_j)$ for all $1\le i\le n-k$ and $j\in\{1,2,3\}$. Combine these, the image set of each $r_i$ under the invertible matrix $[v_1,\ldots,v_k,w_1,\ldots,w_{n-k}]^T$ is finite. Therefore, the solution set is also finite. Hence, we only obtain finitely many supports $|L|=\langle r_1,r_2,r_3\rangle$ as required.\\

   b. Assume there exists a $2$-cycle $[L]$ satisfying the assumption. If $T_1\cdot [L]=0=T_2\cdot [L]$ be two 1-cycles whose weights are $0$, then $|L|\subseteq W_1$, $|L|\subseteq W_2$ where $W_1$ (resp. $W_2$) is the lineality space of $T_1$ (resp. $T_2$). However, this gives $|L|\subseteq W_1\cap W_2=\{0\}$ which is impossible. Hence we may assume $T_1\cdot [L]\neq 0$. Then $|T_1\cdot [L]|\subset W_1$. Moreover, $T_1\cdot T_2\cdot [L]=0$ implies that $T_1\cdot [L]$ lives in $W_2$. This contradicts $W_1\cap W_2=\{0\}$ again.\\

   c.  Theorem 4.15 of \cite{Esterov25} gives that $[L]$ satisfies the Hodge index theorem. So we cannot have the case of $$(T_1\cdot T_2\cdot [L])^2=0<1=(T_1\cdot T_1\cdot [L])(T_2\cdot T_2\cdot [L]).$$
\end{proof}

We give the following example with an explicit calculation to help readers understanding the computation in the proof of Theorem 4.4.

\begin{exa} 
    Let $M_1=\max(0,v_1,v_2)$, $M_2=\max(0,v_3,v_4)$ be two TR functions over $\mathbb{R}^4$ such that det$(v_1,v_2,v_3,v_4)>0$. We represent the hypersurface of $M_i$ $(i=1,2)$ $$V(M_i)=\bigcup_{j=1}^3\widetilde{H}^i_j$$
as the union of three half-hypersurfaces $\widetilde{H}^i_j$. In particular, we can precisely express the hypersurfaces of $M_1$ (and similar for $M_2$):

$\widetilde{H}^1_1:=v_1^\perp\cap\{x\in\mathbb{R}^4|v_2(x)\le 0\}$;

$\widetilde{H}^1_2:=v_2^\perp\cap\{x\in\mathbb{R}^4|v_1(x)\le0\}$;

$\widetilde{H}^1_3:=\{x\in\mathbb{R}^4|v_1(x)=v_2(x)\ge 0\}$.

We denote the hyperplane $H^i_j:=\langle\widetilde{H}^i_j\rangle$, and set $L_{ij}:=H^1_i\cap H^2_j$ to be a 2-plane. We call the lineality space of $V(M_i)$ by $W_i=:\bigcap_{j=1}^3H^i_j$. Notice that $W_1\cap W_2=\{0\}$.\\

  Now  Let $L$ be a rational 2-plane with $M_1\cdot M_2\cdot [L]=1$. If $M_1\cdot M_1\cdot [L]=M_2\cdot M_2\cdot [L ]=0$, then $L=L_{ij}$ for some $i,j\in\{1,2,3\}$ with weight $1$.\\

To see this, let's firstly assume $L\neq W_1,W_2$.

    We claim that $M_1\cdot [L],M_2\cdot [L]$ are two lines. Note that $M_i\cdot [L]=[V(M_i)\cap_{st}L]$, so it is either a line that lies in the lineality space $W_i$, or a 1-D tropical fan with three rays. The stable intersection between $L$ and $V(M_i)$ has three rays which implies that:
    \begin{center}
        $[M_i\cap_{st}L]=[\bigcup_{j=1}^3$ the intersection between $L$ and $\widetilde{H}^i_j$].
    \end{center}
    Now let's consider $M_1\cdot [L]$. Assume $M_1\cdot [L]$ to be a 1-cycle with three rays whose primitive generators are $r_1,r_2,r_3$, then the above statement allows us to set
    $$r_i\subset\widetilde{H}^1_i\ \ i=1,2,3.$$

   This gives
    $$r_1\subset\widetilde{H}^1_1\Longrightarrow v_1(r_1)=0,v_2(r_1)< 0,$$

    $$r_2\subset\widetilde{H}^1_2\Longrightarrow v_2(r_2)=0,v_1(r_2)<0,$$

    $$r_3\subset \widetilde{H}^1_3\Longrightarrow v_1(r_3)=v_1(r_3)>0$$
    since $r_1,r_2,r_3\not\subset W_1$. Therefore, we can calculate $M_1\cdot M_1\cdot [L]$ by
    $$\sum_{i=1}^3M_1(r_i)=0+0+v_1(r_3)=0+0+v_2(r_3)>0$$
    which contradicts $M_1\cdot M_1\cdot [L]=0$. Therefore, $M_1\cdot [L]$ (and $M_2\cdot [L]$) are two lines live in $W_1$ (and $W_2$) respectively. Now we set $M_1\cdot L=\langle\rho_1\rangle$ and $M_2\cdot L=\langle\rho_2\rangle$ where $\rho_1,\rho_2$ are two rays.

    On the one hand, $\langle\rho_i\rangle\subset W_i$ for $i=1,2$ as we discussed above,

    On the other hand, $M_1\cdot M_2\cdot [L]=1$ implies that
    $$1=M_2\cdot (M_1\cdot [L])=M_2\cdot (\langle\rho_1\rangle)=\omega(\rho_1)M_2(\rho_1)+\omega(\rho_1)M_2(-\rho_1).$$
    The non-negativity of $\omega(\rho_1),M_2(\rho_1),M_2(-\rho_1)$ tell us that we may assume
    $$\omega(\rho_1)=1,M_2(\rho_1)=1,M_2(-\rho_1)=0.$$
    Finally, $M_2(\rho_1)=1, M_2(-\rho_1)=0$ means that: if we have
    $$v_3(\rho_1)=1,v_4(\rho_1)<0.$$
    Then $M_2(-\rho_1)=v_4(-\rho_1)>0$, contradiction. So we either have
    $$v_3(\rho_1)=1,v_4(\rho_1)=0,$$
    or
    $$v_3(\rho_1)=0,v_4(\rho_1)=1,$$
    or
    $$v_3(\rho_1)=v_4(\rho_1)=1.$$
    Therefore, $\rho_1$ lives in one of ${H}^2_i$ for $i\in\{1,2,3\}$.

    Combine these, we know that $\rho_1\subset W_1,$ $\rho_1\subset{H}^2_j$, hence $\rho_1\subset W_1\cap{H}^2_j$. In the similar way, we can obtain that $\rho_2=W_2\cap H^1_i$ for some $i\in\{1,2,3\}$. Hence $L=H^1_i\cap H^2_j=L_{ij}$ as required.

    In the case of $L=W_1,W_2$, let's say $L=W_1$. Then $M_2\cdot M_2\cdot [L]=$det$(W_1,W_2)>0$, contradiction.\\
\end{exa}

\subsection{2-dimensional galleries} 

Galleries play an essential role in the study of the conjecture of ‘minimal model programme' (Question 1.2). We state A fundamental result for the gallery of 2-dimensional tropical fans in this sub-section which will help readers learn the structure of 2-dimensional galleries clearly.

\begin{defin}
    Let \(\rho\subset F\) be a ray (1–dimensional cell). We define
$$ad(\rho):=\{\sigma\in F^{(2)}|\rho\subset\sigma\}$$
to be the collection of maximal faces of \(F\) containing \(\rho\).
\end{defin}

The next proposition clarifies the internal structure of any
2–dimensional gallery in \(F\): every such gallery forms a cycle inside \(F\).

\begin{prop}
Let $F$ be a 2-dimensional tropical fan with a non-empty gallery $g(F,L_1,L_2)$. Then for any ray $\rho$ in the support of $|g(F,L_1,L_2)|$, we have $|ad(\rho)\cap g(F,L_1,L_2)|=2$.
\end{prop}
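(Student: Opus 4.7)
The plan is to project along $\mathcal{L} := V(L_1) \cap V(L_2)$, which has codimension $2$ in $\mathbb{R}^n$ because the gallery is nonempty, and reduce the statement about $\rho$ to the combinatorics of a complete fan in $\mathbb{R}^2$. Let $\pi : \mathbb{R}^n \twoheadrightarrow \mathbb{R}^n/\mathcal{L} \cong \mathbb{R}^2$ be the quotient map; then $L_i = L'_i \circ \pi$ for tropical binomials $L'_i$ on $\mathbb{R}^2$, and the projection formula (Lemma 2.14) gives $L'_1 \cdot L'_2 \cdot [\pi_*F] = L_1 \cdot L_2 \cdot [F] = 1$.

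First I would show that $\pi_*F$ equals $[\mathbb{R}^2]$ with constant weight $1$. Since any gallery facet $\sigma_0$ satisfies $\pi|_{\sigma_0}$ injective, $\pi_*F$ is a pure $2$-dimensional balanced cycle in $\mathbb{R}^2$. A standard argument shows that in $\mathbb{R}^2$ such a cycle has complete support and globally constant weight: completeness follows because a boundary ray would violate balancing, and the constant-weight property follows because primitive generators of adjacent $2$-cones in a $1$-dimensional quotient $\mathbb{Z}^2/\mathbb{Z}_\tau$ are forced to be $\pm 1$, equating weights across every shared ray. Then the equation $m \cdot (L'_1 \cdot L'_2 \cdot [\mathbb{R}^2]) = 1$ with both factors positive integers forces $m = 1$.

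From $m = 1$ I would deduce a bijection $\sigma \mapsto \pi(\sigma)$ between $g(F, L_1, L_2)$ and the $2$-cones of the fan structure on $\pi_*F$. The pushforward weight at any $2$-cone is a sum of positive integers $\omega_F(\sigma)\bigl[\mathbb{Z}^2 : \pi(\mathbb{Z}^n_\sigma)\bigr]$ over gallery facets whose image covers a given interior point; summing to $1$ forces a single term with $\omega_F(\sigma) = 1$ and $\pi|_\sigma$ a lattice isomorphism onto its image. This bijection immediately yields the upper bound $|ad(\rho) \cap g(F, L_1, L_2)| \leq 2$: for $\sigma \in ad(\rho) \cap g(F, L_1, L_2)$, injectivity of $\pi|_\sigma$ makes $\pi(\rho)$ a ray of $\pi(\sigma)$, so $\pi(\sigma)$ is one of the exactly two $2$-cones of $\pi_*F$ adjacent to $\pi(\rho)$ (a ray in a complete fan in $\mathbb{R}^2$ borders exactly two $2$-cones).

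For the lower bound $\geq 2$, I would apply the balancing condition at $\rho$ in $F$, push through $\pi$, and reduce modulo $\pi(\langle\rho\rangle) \cong \mathbb{R}$: any non-gallery facet $\sigma' \in ad(\rho)$ has $\pi(\sigma') \subset \mathbb{R}\pi(\rho)$, so $\pi(v_{\sigma'/\rho}) \in \mathbb{R}\pi(\rho)$ contributes $0$ in the quotient, whereas every gallery facet in $ad(\rho)$ contributes a nonzero signed real; a balanced positive-weighted sum of nonzero reals needs both signs to appear, hence at least two summands. The main technical obstacle is the bijection step, namely extracting from $L'_1 \cdot L'_2 \cdot [\pi_*F] = 1$ that no two gallery facets of $F$ project to the same $2$-cone of $\pi_*F$; this is precisely where the exact equality with $1$ rather than some larger multiplicity plays its decisive role.
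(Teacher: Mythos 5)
Your proposal is correct and follows essentially the same route as the paper: project along $\mathcal{L}=V(L_1)\cap V(L_2)$, get the upper bound from the fact that $\pi_*F$ has weight $1$ (so images of gallery facets adjacent to $\pi(\rho)$ cannot overlap), and get the lower bound from the balancing condition at $\rho$ together with the observation that non-gallery facets in $ad(\rho)$ project into the line $\pi(\langle\rho\rangle)$ while gallery facets contribute nontrivially. Your direct ``both signs must appear'' formulation of the lower bound is a cleaner packaging of the paper's contradiction argument, but the underlying mechanism is identical.
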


\begin{proof}

We denote $\pi:\mathbb{R}^n\to\mathbb{R}^n/(V(L_1)\cap V(L_2))$ to be a projection map.
   
    Now let $\rho$ be a ray of $ g(F,L_1,L_2)$, we have $|ad(\rho)\cap g(F,L_1,L_2)|\le 2$. Otherwise there must exist some maximal faces in $\pi(F)$ with weight larger than $1$ which is impossible. 
    
    Clearly there exists at least one $\sigma\in ad(\rho)\cap g(F,L_1,L_2)$, so $1\le |ad(\rho)\cap g(F,L_1,L_2)|\le 2$. It remains to show that $|ad(\rho)\cap g(F,L_1,L_2)|=2$ for all rays $\rho$. By contradiction, if $|ad(\rho)\cap g(F,L_1,L_2)|=1$, i.e. $ad(\rho)\cap g(F,L_1,L_2)=\sigma$. Then balancing condition tells
    $$\sum_{\tau_i\in ad(\rho)\setminus \sigma}\omega(\tau_i)v_{\tau_i/\rho}+\omega(\sigma)v_{\sigma/\rho}\subset \langle\rho\rangle$$
    where $ad(\rho):=\{\tau_1,...,\tau_{|ad(\rho)|-1},\sigma$\}.
    
    For each $\tau_i\in ad(\rho)\setminus\sigma$,  $\langle{\tau_i}\rangle\cap ker(\pi)\neq 0$ and $\rho\not\subset ker(\pi)$ by the construction of $g(F,L_1,L_2)$, hence $\langle{\tau_i}\rangle\not\subseteq ker(\pi)$, so the intersection $\langle{\tau_i}\rangle\cap ker(\pi)$ must be a 1-D subspace, let's say this 1-D space is $\langle n_{\tau_i}\rangle$. Now we have
    $$\langle{\tau_i}\rangle\cap ker(\pi)=\langle n_{\tau_i}\rangle.$$ In this case, there exists $u_i\subset \langle\rho\rangle$ such that
    $$u_i+m_iv_{\tau_i/\rho}=n_{\tau_i}$$for some non-zero  $m_i\in \mathbb{Q}$ and $v_{\tau_i/\rho}$ is the primitive generator from $\langle\rho\rangle$ to $\tau_i$. The balancing condition implies that

    $$(\sum_{i=1}^{|ad(\rho)|-1}\omega(\tau_i)v_{\tau_i/\rho}+1\cdot v_{\sigma/\rho})\subset \langle\rho\rangle.$$
    Therefore, we have
    $$(\prod_{i=1}^{|ad(\rho)|-1}m_i)\cdot(\sum_{i=1}^{|ad(\rho)|-1}\omega(\tau_i)v_{\tau_i/\rho}+1\cdot v_{\sigma/\rho}+\sum_{i=1}^{|ad(\rho)|-1}\frac{\omega(\tau_i)u_{i}}{m_i})\subset \langle\rho\rangle.$$
    So
    $$\sum_{i=1}^{|ad(\rho)|-1}(\omega(\tau_i)\prod_{j:j\neq i}m_j(u_{i}+m_iv_{\tau_i/\rho}))+(\prod_{i=1}^{|ad(\rho)|-1}m_i)v_{\sigma/\rho}\subset \langle\rho\rangle.$$
    So
    $$\sum_{\tau_i}(\omega(\tau_i)\prod_{j:j\neq i}m_j(n_{\tau_i}))+(\prod_{i=1}^{|ad(\rho)|-1}m_i)v_{\sigma/\rho}\subset \langle\rho\rangle.$$
    After applying $\pi$ to both sides we have
    $$(\prod_{i=1}^{|ad(\rho)|-1}m_i)\pi(v_{\sigma/\rho})\subset \pi(\langle\rho\rangle).$$
    However, $v_{\sigma/\rho}$ points from $\rho$ to the interior of $\sigma$, so this tells us that $\langle\sigma\rangle\cap ker(\pi)\neq 0$, contradiction!

    Therefore $|ad(\rho)\cap g(F,L_1,L_2)|=2$ as required.
\end{proof}

\subsection{The finiteness theorem} Theorem 4.13—our paper’s keystone result—appears immediately after a monotonicity lemma concerning the faces of \(F\).

\begin{lemma}
 Let $F$ be a 2-dimensional tropical fan over $\mathbb{R}^n$. Then $$T_1\cdot T_2\cdot [F]\ge T_1\cdot T_2\cdot [\langle\sigma\rangle]$$ for any face $\sigma\in F^{(2)}$.
\end{lemma}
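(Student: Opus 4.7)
The plan is to rewrite both sides as weights of stable intersections at the origin via Theorem 2.12, and then to select a single generic perturbation under which the contribution from the facet $\sigma$ to $T_{1}\cdot T_{2}\cdot [F]$ already dominates $T_{1}\cdot T_{2}\cdot [\langle\sigma\rangle]$.

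First, for a generic small translation vector $v\in\mathbb{R}^{n}$, the weight at the origin of $V(T_{1})\cap_{st}V(T_{2})\cap_{st}F$ decomposes as a sum over triples $(\alpha,\beta,\sigma')$ with $\alpha\in V(T_{1})^{(n-1)}$, $\beta\in V(T_{2})^{(n-1)}$, $\sigma'\in F^{(2)}$ for which the unique intersection point of $\langle\alpha\rangle$, $\langle\beta\rangle$ and $\langle\sigma'\rangle+v$ lies in all three cones $\alpha$, $\beta$, and $\sigma'+v$; each contribution equals
$$\omega_{T_{1}}(\alpha)\,\omega_{T_{2}}(\beta)\,\omega_{F}(\sigma')\,[\mathbb{Z}^{n}:\mathbb{Z}_{\alpha}+\mathbb{Z}_{\beta}+\mathbb{Z}_{\sigma'}]\ge 0.$$
The analogous formula with $[\langle\sigma\rangle]$ in place of $[F]$ (a single $2$-cell of weight $1$, with no cone condition on the plane slot) gives $T_{1}\cdot T_{2}\cdot [\langle\sigma\rangle]$.

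I would then fix a point $u$ in the relative interior of $\sigma$ and take $v=-\varepsilon u+\varepsilon^{2}w$ for generic $w\in\mathbb{R}^{n}$ and small $\varepsilon>0$. For every pair $(\alpha,\beta)$ contributing to $T_{1}\cdot T_{2}\cdot [\langle\sigma\rangle]$ the spanning condition forces $\langle\alpha\rangle+\langle\beta\rangle=\mathbb{R}^{n}$, so $S:=\langle\alpha\rangle\cap\langle\beta\rangle$ is complementary to $\langle\sigma\rangle$; writing the unique splitting $v=v_{S}+v_{\sigma}$ with $v_{S}\in S$ and $v_{\sigma}\in\langle\sigma\rangle$, a short calculation shows that the intersection point is $p=v_{S}$ and $p-v=-v_{\sigma}$. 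For small $\varepsilon$ the component $v_{\sigma}$ stays close to $-u\in -\mathrm{relint}(\sigma)$, so $-v_{\sigma}\in\sigma$. Thus $(\alpha,\beta,\sigma)$ contributes to $T_{1}\cdot T_{2}\cdot [F]$ with the same lattice index as the corresponding pair in the sum for $T_{1}\cdot T_{2}\cdot [\langle\sigma\rangle]$, multiplied by $\omega_{F}(\sigma)\ge 1$. Summing over pairs,
$$T_{1}\cdot T_{2}\cdot [F]\;\ge\;\omega_{F}(\sigma)\cdot T_{1}\cdot T_{2}\cdot [\langle\sigma\rangle]\;\ge\;T_{1}\cdot T_{2}\cdot [\langle\sigma\rangle],$$
since contributions from facets $\sigma'\neq\sigma$ are non-negative.

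The main obstacle is ensuring that a single generic $v$ simultaneously steers every contributing intersection point into $\sigma+v$. This reduces to the observation that the finitely many projections $v\mapsto v_{\sigma}$ arising from the splittings $\mathbb{R}^{n}=S_{\alpha,\beta}\oplus\langle\sigma\rangle$ all agree at $v=-u\in\langle\sigma\rangle$, taking the value $-u\in -\mathrm{relint}(\sigma)$; since $-\mathrm{relint}(\sigma)$ is open, a sufficiently small generic perturbation of $v$ keeps every $v_{\sigma}$ inside it.
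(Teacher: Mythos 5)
Your proposal is correct and follows essentially the same route as the paper: both rewrite the intersection numbers as stable intersections, choose a generic displacement that pushes the intersection point into the relative interior of $\sigma$ so that the contribution of $\sigma$ to $T_1\cdot T_2\cdot[F]$ equals $\omega_F(\sigma)\,T_1\cdot T_2\cdot[\langle\sigma\rangle]$, and conclude by non-negativity of the remaining facets' contributions. Your write-up merely makes explicit two points the paper leaves implicit — why a single generic $v$ works simultaneously for all contributing pairs $(\alpha,\beta)$, and that the case $T_1\cdot T_2\cdot[F]=0$ needs no separate treatment.
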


\begin{proof}
If $T_1\cdot T_2\cdot [F]=0$, then $F$ lives in the lineality space of $V(T_1)\cap_{st} V(T_2)$ so $T_1\cdot T_2\cdot[\langle\sigma\rangle]$ follows immediately.

    If $T_1\cdot T_2\cdot [F]>0$, then intersection product can be viewed as stable intersection by Theorem 2.12:
    $$T_1\cdot T_2\cdot [F]=[V(T_1)\cap_{st}(V(T_2)\cap_{st}F)]=[(V(T_1)\cap_{st}V(T_2))\cap_{st}F]$$
    which is a 0-cycle with some positive weight $(\textbf{0},\omega_{\textbf{0}})$. In other words, the stable intersection between $F$ and $V(T_1)\cap_{st}V(T_2)$ is the origin point $\textbf{0}$, and each face contributes a non-negative multiplicity to the weight of $\textbf{0}$. For any $\sigma\in F$, the contribution of $\sigma$ to $T_1\cdot T_2\cdot [F]$ is equal to the stable intersection number
    $$ (V(T_1)\cap_{st}V(T_2))\cap_{st}(\sigma+\textbf{v}) $$
    where $\textbf{v}$ is a generic vector such that the shifting under $\textbf{v}$ forces $\textbf{0}$ living in the interior of $\sigma$. This intersection is exactly the point $\textbf{0}$ implies that
    $$(V(T_1)\cap_{st}V(T_2))\cap_{st}(\sigma+\textbf{v})=(V(T_1)\cap_{st}V(T_2))\cap_{st}\langle\sigma\rangle=T_1\cdot T_2\cdot[\langle\sigma\rangle] $$
    and finally the inequality follows from $T_1\cdot T_2\cdot [F]\ge(V(T_1)\cap_{st}V(T_2))\cap_{st}(\sigma+\textbf{v})$.
\end{proof}

\begin{theor}
   Let $T_1,T_2$ be two non-negative TR functions over $\mathbb{R}^n$ such that
$$T_1:=\max(v_0,v_1,v_2,\dots,v_k),$$
$$T_2:=\max(w_0,w_1,w_2,\dots,w_{n-k}),$$
 $v_1,v_2,\dotsc,v_k,w_1,\dotsc,w_{n-k}$ are linearly independent over $(\mathbb{R}^n)^\vee$ and $v_0=w_0=0$. Then there are finitely many $2$-cycles $[F]$ over $\mathbb{Z}^n$ such that $T_1\cdot T_2\cdot [F]\le1$, $T_1\cdot T_1\cdot [F]\le 1$, and $T_2\cdot T_2\cdot [F]\le1$.
\end{theor}

\begin{proof}

    We denote $L^1(i,j)\le T_1$ to be the tropical binomials of $T_1$ defined by $\max(v_i,v_j)$ and $L^2(a,b)\le T_2$ to be $\max(w_a,w_b)$. The monotonicity of Tropical intersection product yields the following:
    
    $$\begin{cases}L^1(i,j)\cdot L^2(a,b)\cdot [F]\le 1;&0\le i\neq j\le k,\ 0\le a\neq b\le n-k\\
    L^1(i,j)\cdot L^1(i',j')\cdot [F]\le 1;& 0\le i,j,i',j'\le k,\ |\{i,j\}\cap\{i',j'\}|\le 1\\
    L^2(a,b)\cdot L^2(a',b')\cdot [F]\le 1;& 0\le a,b,a',b'\le n-k
,\ |\{a,b\}\cap\{a',b'\}|\le 1.    \end{cases}$$
For any such $2$-cycles $[F]$, we refer to $\mathrm{G}\subset\mathrm{Gly}([F])$ as the set of galleries defined by the tropical binomials $\{L^1(i,j),L^2(a,b)\}$, $\{L^1(i,j),L^1(i',j')\}$ and $\{L^2(a,b),L^2(a',b')\}$ above.
  The inequalities above conclude that: for any $\sigma\in F^{(2)}$, either $\sigma$ lives in some galleries from $\mathrm{G}$, or it does not live in any gallery from $\mathrm{G}$. Assume there exists a $\sigma\in F^{(2)}$ which does not live in any gallery in $\mathrm{G}$. Then we claim that
   $$T_1\cdot T_2\cdot [\langle\sigma\rangle]=0=T_1\cdot T_1\cdot[\langle\sigma\rangle]=T_2\cdot T_2\cdot [\langle\sigma\rangle].$$
   To see this, assume $T_1\cdot T_2\cdot [\langle\sigma\rangle]=1$. From Theorem 3.11, then the classification of regular 1-cycle $T_2\cdot [\langle\sigma\rangle]$ means that there exists a tropical binomial $L^1(i,j)\le T_1$ which serves as the regular sequence of $T_2\cdot [\langle\sigma\rangle]$: $L^1(i,j)\cdot T_2\cdot[\langle\sigma\rangle]=1$. After swapping $T_2$ and $L^1(i,j)$ we obtain another tropical binomial $L^2(a,b)\le T_2$ which satisfies $L^2(a,b)\cdot L^1(i,j)\cdot[\langle\sigma\rangle]=1$. Therefore, $\sigma$ belongs to the gallery over pair $\{L^1(i,j),L^2(a,b)\}$, contradiction. The same reason allows us to obtain $T_1\cdot T_1\cdot [\langle\sigma\rangle]=0=T_2\cdot T_2\cdot [\langle\sigma\rangle]$ as required. However, these three equalities contradict with Lemma 4.7 b.

   Therefore, for any representative $F$ of $[F]$, $F$ is a covering of the galleries from $\mathrm{G}$. So the weight function of $F$ is the constant $1$. To show the finiteness of such $[F]$, it is enough to show that we only have finitely many supports $|F|$.

   Given any $\sigma\in F^{(2)}$. Lemma 4.11 gives
   $$T_1\cdot T_2\cdot[\langle\sigma\rangle]\le T_1\cdot T_2\cdot [F]\le1,$$
   $$T_1\cdot T_1\cdot[\langle\sigma\rangle]\le T_1\cdot T_1\cdot [F]\le1,$$
   $$T_2\cdot T_2\cdot [\langle\sigma\rangle]\le T_2\cdot T_2\cdot [F]\le 1.$$
   Theorem 4.4 and Lemma 4.7 directly reveal a finite list of all 2-planes $\langle\sigma\rangle$. In other words, we can only have finitely many ambient spaces $\langle\sigma\rangle$ for any $\sigma\in F^{(2)}$. Hence, we can only have finitely many supports $|F|$ since the coarsest representative $F$ of $[F]$ is a subfan of the union of these 2-planes (with weight $1$).

\end{proof}

\begin{theor}
    Assume that $M_1,M_2$ are two non-negative TR functions over $\mathbb{R}^n$ such that dim$(\mathrm{Newt}(M_1)+\mathrm{Newt}(M_2))=n$. Then there are finitely many 2-cycles $[F]$ such that $M_1\cdot M_1\cdot [F]\le 1$, $M_1\cdot M_2\cdot [F]\le 1$, and $M_2\cdot M_2\cdot [F]\le 1$.
\end{theor}

\begin{proof}
Let $[F]$ be a $2$-cycle over $\mathbb{R}^n$ that {satisfies} the assumptions above and let $M_{1},M_{2}$ be non-negative TR functions such that dim$(\mathrm{Newt}(M_1)+\mathrm{Newt}(M_2))=n$ with
\[
  M_{1}\!\cdot\!M_{2}\!\cdot\![F] \;=\;1.
\]
Then one can choose any non-negative TR functions $T_{1}\le M_{1}$ and $T_{2}\le M_{2}$ such that $$T_1:=\max(0,v_1,v_2,\dots,v_k),$$
$$T_2:=\max(0,w_1,w_2,\dots,w_{n-k}),$$
and $v_1,v_2,\dotsc,v_k,w_1,\dotsc,w_{n-k}$ are linearly independent over $(\mathbb{Z}^n)^\vee$. 
By monotonicity,
\[
  T_{1}\!\cdot\!T_{2}\!\cdot\![F] \;\le\; M_{1}\!\cdot\!M_{2}\!\cdot\![F] \;\le\;1.
\]
Similarly, we have
$$T_1\cdot T_1\cdot [F]\le M_1\cdot M_1\cdot [F]\le1,$$
$$T_2\cdot T_2\cdot [F]\le M_2\cdot M_2\cdot [F]\le1.$$
The three inequalities above are exactly the assumption stated in Theorem 4.12. Therefore, we can only have finitely many such $2$-cycles $[F]$ from Theorem 4.12 immediately.
\end{proof}

Finally, we obtain our desired finiteness theorem for the strongly regular $2$-cycles from Theorem 4.13 directly.

\begin{coro}[Finiteness theorem for strongly regular $2$-cycles]
    Assume that $M_1,M_2$ are two non-negative TR functions over $\mathbb{R}^n$ such that dim$(\mathrm{Newt}(M_1)+\mathrm{Newt}(M_2))=n$. Then there are finitely many $2$-cycles which are strongly regular over $M_1,M_2$.
\end{coro}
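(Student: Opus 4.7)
The plan is to derive Corollary 4.14 directly from Theorem 4.13 by a simple containment of solution sets, so the proposal is essentially a bookkeeping exercise. First I would unpack Definition 4.2: a $2$-cycle $[F]$ is strongly regular over $(M_1, M_2)$ precisely when
\[
M_1 \cdot M_2 \cdot [F] = 1, \qquad M_1 \cdot M_1 \cdot [F] \le 1, \qquad M_2 \cdot M_2 \cdot [F] \le 1.
\]
In particular the equality $M_1 \cdot M_2 \cdot [F] = 1$ trivially upgrades to the inequality $M_1 \cdot M_2 \cdot [F] \le 1$, so every strongly regular $2$-cycle over $(M_1, M_2)$ automatically satisfies the three inequalities
\[
M_1 \cdot M_1 \cdot [F] \le 1, \qquad M_1 \cdot M_2 \cdot [F] \le 1, \qquad M_2 \cdot M_2 \cdot [F] \le 1
\]
that constitute the hypothesis of Theorem 4.13.

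Next, since the Newton-polytope non-degeneracy $\dim(\mathrm{Newt}(M_1) + \mathrm{Newt}(M_2)) = n$ is precisely the remaining hypothesis of Theorem 4.13, that theorem applies verbatim and guarantees that the set of $2$-cycles $[F]$ satisfying the three inequalities above is finite. The strongly regular $2$-cycles over $(M_1, M_2)$ form a subset of this already finite set, and the corollary follows.

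There is no genuine obstacle here: all the substantive work—reducing from $M_i$ to tropical binomials $T_i \le M_i$ with linearly independent linear pieces, finiteness of the $2$-planes with bounded self- and cross-intersection numbers via Theorem 4.4 and Lemma 4.7, and the observation from Proposition 4.10 that any representative must be assembled out of the galleries associated to the binomial pairs $\{L^1(i,j), L^2(a,b)\}$—has already been carried out in the proof of Theorem 4.13. The only thing a careful writeup should verify is that the strict equality $M_1 \cdot M_2 \cdot [F] = 1$ from strong regularity is genuinely unused in the finiteness step (it is), so that the implication from the three mixed inequalities to finiteness is exactly what Theorem 4.13 delivers.
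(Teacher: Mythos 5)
Your proposal is correct and matches the paper exactly: the paper derives this corollary directly from Theorem 4.13 by observing that every strongly regular $2$-cycle satisfies the three inequalities in that theorem's hypothesis, so the strongly regular ones form a subset of an already finite set. Your additional remark that the equality $M_1\cdot M_2\cdot[F]=1$ is simply weakened to an inequality is precisely the (implicit) content of the paper's one-line deduction.
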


We conclude with an explicit example that lies outside both frameworks
considered in this paper.
The construction produces a $2$-cycle in $\mathbb{R}^{4}$ that is neither
strongly regular nor satisfies the Hodge index theorem.

Compare Section $5$ of \cite{BabaeeHuh19}, where the authors also give a
$2$-cycle in $\mathbb{R}^{4}$ violating the Hodge index theorem; their example
is considerably more sophisticated, being {irreducible}.
By contrast, the $2$-cycle presented below is reducible.

This raises a natural question:
can the methods leading to Theorems 4.12 and 4.13 be refined to yield an
{irreducible} $2$-cycle that fails the Hodge index theorem?

\begin{exa}
    We can have a $2$-cycle $[F]$ satisfying $M_1\cdot M_2\cdot [F]=0$ but $M_1\cdot M_1\cdot [F]=1=M_2\cdot M_2\cdot [F]$: Let $M_1:=\max(0,v_1,v_2)$ and $M_2:=\max(0,v_3,v_4)$ such that det$(v_1,v_2,v_3,v_4)=1$. Let $F$ be a 2-dimensional tropical fan whose support is $W_1\cup W_2$ where $W_1$ (resp. $W_2$) is the lineality space of $V(T_1)$ (resp. $V(T_2)$) and weight function is constant $1$. Then $M_2\cdot [F]=[V(M_2)\cap_{st} W_1]$ and $M_1\cdot [F]=[V(M_1)\cap_{st} W_2]$. Since both of them are $1$-D Bergman fans, so $M_1\cdot M_1\cdot [F]=1=M_2\cdot M_2\cdot [F]$. But $M_1\cdot [F]\subset W_2$, hence $M_2\cdot M_1\cdot [F]=0$.
\end{exa}

\printbibliography
\end{document}